\newtheorem{thm}{Theorem}[section]
\newtheorem{lem}[thm]{Lemma}
\newtheorem{rem}{Remark}
\newtheorem{coro}[thm]{Corollary}
\newtheorem{defnm}{Definition}[section]
\newtheorem{prop}[thm]{Proposition}
\newcommand{\nn}{\nonumber}
\newcommand{\la}{\langle}
\newcommand{\ra}{\rangle}
\newcommand{\al}{\alpha}
\newcommand{\nb}{\nabla}
\newcommand{\ro}{\rho}
\newcommand{\ld}{\lambda}
\newcommand{\epa}{\varepsilon}
 \newcommand{\Norm}[1]{\left\Vert#1\right\Vert}
\numberwithin{equation}{section}
\begin{document}
\title{\bf New Geometric Flows on Riemannian Manifolds and Applications to
Schr\"odinger-Airy Flows}

\author{\bf Xiaowei Sun\thanks{Supported by NSFC, Grant No. 11226082} and Youde
Wang\thanks{Supported by NSFC, Grant No. 10990013}}

\date{}
\maketitle

\begin{abstract}
In this paper, we define a class of new geometric flows on a
complete Riemannian manifold. The new flow is related to the
generalized (third order) Landau-Lifishitz equation. On the other
hand it could be thought of a special case of the Schr\"odinger-Airy
flow when the target manifold is a K\"ahler manifold with constant
holomorphic sectional curvature. We show the local existence of the
new flow on a complete Riemannian manifold with some assumptions on
Ricci tensor. Moreover, if the target manifolds are Einstein or some
certain type of locally symmetric spaces, we obtain the global
results.
\end{abstract}

\section{Introduction}
Let $(N, h)$ be a Riemannian manifold equipped with a metric $h$.
Then there is a Levi-Civita connection associated to $h$. Denote the
Levi-Civita connection on $(N, h)$ by $\nabla$  and the
corresponding curvature operator on $(N, h)$ by $R$, which is a
$(1,3)$-tensor.

The Ricci curvature $Ric$ is the trace of the curvature operator
$R$. It is well-known that $Ric$ is a symmetric bilinear form. If
for any tangent vector fields $X, Y$ on $(N, h)$ there holds
$$Ric(X, Y)=kh(X, Y),$$ then $(N, h)$ is said to be an Einstein
manifold with Einstein constant $k$.

For any smooth map $u(x,t)$ from $S^1\times\mathbb{R}$ into $(N,h)$,
Let $\nabla_x$ denote the covariant derivative
$\nabla_{\frac{\partial}{\partial x}}$ on the pull-back bundle
$u^{-1}TN$ induced from the Levi-Civita connection $\nabla$ on $N$.
For the sake of convenience, we always denote $\nabla_xu$ and
$\nabla_tu$ by $u_x$ and $u_t$ respectively.

For the maps from a unit circle $S^1$ or a real line $\mathbb{R}$
into $N$, we define a class of new geometric flows as follows:
\begin{eqnarray}
\frac{\partial u}{\partial t} = \nabla_x^2 u_x
+{\ro}Ric(u_x,u_x)u_x,\nn
\end{eqnarray}
where $\ro$ is a positive constant. If $(N, h)$ is an Einstein
manifold, the new geometric flow is an energy conserved system.

Obviously, if the target manifold is Euclidian space $\mathbb{R}^n$,
the new flow then reduces to the vector Airy equation
$$u_t=u_{xxx}.$$
We should mention that solutions to the scalar Airy equation may be
expressed in terms of so-called Airy functions, which are named
after the astronomer George Biddell Airy(1801-1892) and are
solutions to the differential equation $$y_{xx}(x)-xy(x)=0.$$

One of the related problems with our new flow is the so-called
generalized Landau-Lifishitz equation written by
\begin{eqnarray*}
u_t =  u_{xxx} + \frac{3}{2}(|u_x|^2u)_x, ~~~~~ |u|^2 = 1.
\end{eqnarray*}
Here $u: S^1(~\mbox{or}~~\mathbb{R})\times\mathbb{R}\rightarrow
S^n\subset\mathbb{R}^{n+1}$ and $|\cdot|$ denotes the standard
metric in $\mathbb{R}^{n+1}$ (see \cite{DJKM, MS, TS}). If $n=2$, it
is just the third order Landau-Lifshitz equation. It is easy to see
that this equation can be rewritten by a geometric version as
follows:
\begin{eqnarray*}
u_t = \nabla_x^2 u_x + \frac{1}{2}|u_x|^2u_x,
\end{eqnarray*}
where $\nabla_x=\nabla_{\frac{\partial}{\partial x}}$ denotes the
Levi-Civita connection on the standard unit sphere $S^n$. For detail
we refer to \cite{SY}. Furthermore, we may intrinsically write the
equation by
\begin{eqnarray*}
u_t = \nabla_x^2 u_x + \frac{1}{2(n-1)}Ric(u_x, u_x)u_x,
\end{eqnarray*}
where $Ric(\cdot,\cdot)$ is the Ricci tensor (operator) on $S^n$.

In fact, from the integrable system point of view the following
generalized Landau-Lifshitz equation was considered in \cite{GS, MS,
TW}
\begin{eqnarray}\label{eq:000}
u_t = \nabla_x^2 u_x + \frac{1}{2}|u_x|^2u_x + \frac{3}{2}\langle u,
A(u)\rangle u_x,
\end{eqnarray}
where $u:\mathbb{R}\times\mathbb{R}\rightarrow
S^n\subset\mathbb{R}^{n+1}$ and $A$ is a constant symmetric
$(n+1)\times(n+1)$ matrix. It was shown that this equation is
integrable by the inverse scattering method for any $n$ and $A$. One
has known that this equation also defines an infinitesimal symmetry
for the well-known C. Neumann system \cite{MS, Ve}
$$u_{xx} + |u_x|^2 u= A(u) -\langle u, A(u)\rangle u, ~~~~~~ |u|^2=1$$
describing the dynamics of a particle on the unit sphere under the
influence of field with the quadratic potential
$$P =\frac{1}{2}\langle u, A(u)\rangle.$$

Recently, Song and Yu  in \cite{SY} employed the geometric energy
method established in \cite{DW, YD} to show the global
well-posedness of the corresponding Cauchy problem from
$S^1\times\mathbb{R}\rightarrow S^n$. We also mention that the
stationary solution of the geometric flow defines an interesting
kind of curves on $S^n$ (see \cite{So}).

On the other hand, if $(N, J, h)$ is a K\"ahler manifold, where $J$
is a compatible complex structure with $h$, in \cite{SW*} we
introduced a class of geometric flows for the maps from a unit
circle $S^1$ or a real line $\mathbb{R}$ into $(N, J, h)$, geometric
Schr\"odinger-Airy flow, as follows:
\begin{eqnarray}\label{eq:0.1*}
\frac{\partial u}{\partial t} = \al\left( \nabla_x^2 u_x
+{1\over2}R(u_x,J_uu_x)J_uu_x\right) + \beta J_u\nb_xu_x + \gamma
|u_x|^2u_x,
\end{eqnarray}
where $\al$, $\beta$ and $\gamma$ are real constants, $R$ is the
Riemannian curvature tensor on $N$ and $J_u\equiv J(u)$. In
\cite{SW*} we have shown that the Schr\"odinger-Airy flows relate
closely to several important and well-known physical or mechanical
systems \cite{FM,GK,GS,Ha,Hi,HK,MR,NT,Od,RL}. This flow is hybrid of
geometric KdV flow \cite{DQ, SW} and geometric Schr\"odinger flow
\cite{BIKT, Uh, CT, D, DW,RRS}, and relates closely to derivative
nonlinear Schr\"odinger equation\cite{SW*,WH}.

If $\beta=0$, the Schr\"odinger-Airy flow on a K\"ahler manifold
with constant holomorphic sectional curvature $K$ becomes
\begin{eqnarray}\label{eq:0.2*}
\frac{\partial u}{\partial t} = \al\left( \nabla_x^2 u_x
+{1\over2}K|u_x|^2u_x\right) + \gamma |u_x|^2u_x.
\end{eqnarray}
By scaling with respect to the time variable, we can change the
above flow into
\begin{eqnarray*}
\frac{\partial u}{\partial t} =  \nabla_x^2 u_x + \ro|u_x|^2u_x.
\end{eqnarray*}
This is just our new geometric flow on a K\"ahler manifold with
constant holomorphic sectional curvature. However, by our knowledge
one has not obtained any global existence results for
(\ref{eq:0.2*}). Thus, by choosing $N$ specially, the new flow could
be regarded as special cases of the Schr\"odinger-Airy flow. But in
general cases, these two flows differ a lot since the new geometric
flow is defined on all Riemannian manifolds while the
Schr\"odinger-Airy flow is only defined on K\"ahler manifolds.

In this paper, we mainly discuss the local existence for the Cauchy
problem of the new geometric flow on a complete Riemannian manifold
$(N,h)$ defined by
\begin{equation}\label{eq:0.1}
\left\{
\begin{aligned}
&u_t=\nabla_x^2u_x+{\ro}Ric(u_x,u_x)u_x,
\quad x\in S^1;\\
&u(x,0)=u_0(x).
\end{aligned}\right.
\end{equation}
Furthermore, when $N$ is some kind of special locally symmetric
spaces, we could obtain some results on global existence of
(\ref{eq:0.1}). The method we use here is the geometric energy
method which is also adopted to discuss the KdV geometric flow in
\cite{SW}. But technically speaking, the processes differ greatly
especially in proving the existence of the two geometric flows. By
utilizing the Ricci curvature tensor, we introduce a new geometric
norm which would help us to obtain the estimates we need.

Before stating our main results, we need to introduce several
definitions on Sobolev spaces of sections with vector bundle value
on $M$. Let $(E, M, \pi)$ be a vector bundle with base manifold $M$.
If $(E, M, \pi)$ is equipped with a metric, then we may define
so-called vector bundle value Sobolev spaces as follows:

\begin{defnm}
$H^m(M, E)$ is the completeness of the set of smooth sections with
compact supports denoted by $\{s| \,\,s\in C_0^\infty(M, E)\}$ with
respect to the norm
$$\|s\|^2_{H^m(M, E)}=\sum_{i=0}^m \int_M |\nabla^i s|^2dM.$$
Here $\nabla$ is the connection on $E$ which is compatible with the
metric on $E$.
\end{defnm}

\begin{defnm} \label{def:2}Let $\mathbb{N}$ be the set of positive integers.
For $m \in\mathbb{N}\cup\{0\}$, the Sobolev space of maps from $S^1$
into a Riemannian manifold $(N, h)$ is defined by
$$H^{m+1}(S^1;N) = \{u\in C(S^1; N) |\,\, u_x\in H^m(S^1; TN)\},$$
where $u_x \in H^m(S^1; TN)$ means that $u_x$ satisfies
$$\|u_x\|^2_{H^m(S^1; TN)} =\sum^m_{j=0}\int_{S^1}h(u(x))(\nabla^j_xu_x(x),
\nabla_x^ju_x(x))dx < +\infty.$$
\end{defnm}

We usually use $W^{k,p}(M, N)$ to denote the space of Sobolev maps
from $M$ into $N$, and $W^{k,p}(M, \mathbb{R}^l)$ to denote the
space of Sobolev functions.\\

\noindent Our main results are as follows:

\begin{thm}\label{thm:1.1}
Let $(N, h)$ be a complete Riemannian manifold with parallel Ricci
tensor, i.e. $\nb Ric\equiv0$. If the Ricci curvature on $N$ has a
positive lower bound $\lambda>0$ (or a negative upper bound
$-\lambda<0$), then the local solutions $u\in L^\infty
([0,T],H^k(S^1, N))$ $($$k \geq 4$$)$ of the Cauchy problem
(\ref{eq:0.1}) with the initial map $u_0\in H^k(S^1, N)$ is unique.
Moreover, the local solution is continuous with respect to the time
variable, i.e., $u\in C([0,T],H^k(S^1, N))$.

\end{thm}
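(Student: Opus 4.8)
The plan is to treat both claims by the geometric energy method, reducing everything to concrete estimates for vector-valued functions via an isometric embedding $N\hookrightarrow\mathbb{R}^L$ (Nash). Writing $P(u)$ for the orthogonal projection of $\mathbb{R}^L$ onto $T_uN$, the intrinsic equation (\ref{eq:0.1}) takes the extrinsic form $u_t=P(u)u_{xxx}+F(u,u_x,u_{xx})$, where $F$ collects the second-fundamental-form and Ricci contributions and is smooth in its arguments (one checks $\nabla_x^2u_x=P(u)u_{xxx}+P(u)\partial_xP(u)\,u_{xx}$). Since $k\ge 4$ and $S^1$ is one-dimensional, the Sobolev embedding $H^k(S^1)\hookrightarrow C^3(S^1)$ guarantees that any solution $u\in L^\infty([0,T],H^k)$ has $u,u_x,u_{xx},u_{xxx}$ uniformly bounded on $S^1\times[0,T]$; these bounds furnish all the coefficient controls used below.

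For uniqueness, let $u_1,u_2$ be two solutions with $u_1(\cdot,0)=u_2(\cdot,0)$ and set $w=u_1-u_2\in\mathbb{R}^L$. Subtracting the two extrinsic equations and using the smoothness of $P$ and $F$, I would obtain a homogeneous linear equation $w_t=P(u_1)w_{xxx}+B_2w_{xx}+B_1w_x+B_0w$, where the matrix coefficients $B_j$ depend on $u_1,u_2$ and their derivatives and are bounded by the $C^3$-bounds above; every difference of coefficients is $O(|w|+|w_x|)$ since it carries a factor $u_1-u_2$ or $(u_1-u_2)_x$ multiplied by a bounded derivative of $u_2$ up to order three, and so is absorbed into $B_1w_x+B_0w$. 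Running the basic estimate for $\tfrac{d}{dt}\int_{S^1}|w|^2\,dx$, the leading term $\int\langle w,P(u_1)w_{xxx}\rangle\,dx$ is integrated by parts repeatedly on the closed curve $S^1$; using $P^T=P$ and $P^2=P$ the purely dispersive part vanishes and one is left with lower-order integrals carrying derivatives of $P(u_1)$.

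The main obstacle is a loss of one derivative: the integrations by parts against $P(u_1)w_{xxx}$ together with the genuine second-order term $B_2w_{xx}=P(u_1)\partial_xP(u_1)\,w_{xx}$ leave a quadratic expression in $w_x$ whose coefficient is the symmetric part of the effective second-order operator and does not vanish (it is essentially $-\partial_xP(u_1)$), so a naive estimate only yields $\tfrac{d}{dt}\|w\|_{L^2}^2\le C(\|w\|_{L^2}^2+\|w_x\|_{L^2}^2)$, which fails to close. To remove this term I would introduce a gauge conjugation $w\mapsto\tilde w$ (equivalently a modified difference energy with a first-order correction) chosen so that differentiating the gauge cancels the symmetric part of $B_2$; after this, the transformed equation has no genuine second-order term and the energy closes as $\tfrac{d}{dt}\|\tilde w\|_{L^2}^2\le C\|\tilde w\|_{L^2}^2$. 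Here the hypothesis $\nabla Ric\equiv 0$ is exactly what prevents the curvature term $\rho\,Ric(u_x,u_x)u_x$ from reintroducing a top-order loss, since differentiation never falls on the parallel tensor $Ric$. Gronwall together with $\tilde w(\cdot,0)=0$ then forces $\tilde w\equiv 0$, hence $u_1\equiv u_2$.

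For the continuity statement, the equation gives $u_t=\nabla_x^2u_x+\rho\,Ric(u_x,u_x)u_x\in L^\infty([0,T],H^{k-3})$, so $u$ is Lipschitz in time with values in $H^{k-3}$, whence $u\in C([0,T],H^{k-3})$ and, by interpolation with the uniform $H^k$ bound, $u\in C_{w}([0,T],H^k)$ (weak continuity). To upgrade weak to strong continuity in the Hilbert space $H^k$ it suffices to show $t\mapsto\|u(t)\|_{H^k}$ (equivalently the geometric norm built from $Ric$ and the parallel condition) is continuous, since weak continuity plus convergence of norms yields strong continuity. Continuity of the norm follows from the a priori energy inequality for a single solution applied both forward and backward in time — the equation being invariant under $(t,x)\mapsto(-t,-x)$ — which gives $\|u(t)\|_{H^k}\le\|u(t_0)\|_{H^k}+o(1)$ as $t\to t_0$ from both sides; combined with weak lower semicontinuity this pins down $\lim_{t\to t_0}\|u(t)\|_{H^k}=\|u(t_0)\|_{H^k}$, and hence $u\in C([0,T],H^k)$. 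I expect the genuine difficulty to lie entirely in the derivative loss of the uniqueness step, the continuity being essentially soft once the two-sided energy bound and weak continuity are in hand.
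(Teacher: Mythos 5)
Your continuity argument is essentially the paper's: they too derive $u_t\in L^\infty([0,T],H^{k-3})$, get weak continuity of $\nabla_x^{k-1}u_x$ in $L^2$, prove $\frac{d}{dt}\|\nabla_x^{k-1}u_x\|^2_{L^2}\leq C$ from the a priori bounds, and combine the resulting one-sided norm bound with weak lower semicontinuity and uniqueness to upgrade to strong continuity in $H^k$; your time-reversal remark $(t,x)\mapsto(-t,-x)$ is a legitimate (in fact cleaner) way to supply the two-sidedness that the paper handles via uniqueness. The uniqueness step, however, contains a genuine gap. You correctly diagnose that the pure $L^2$ difference energy does not close (the residual is a quadratic form $\int\langle w_x,\,\partial_xP(u_1)\,w_x\rangle\,dx$, which costs one derivative), but the gauge conjugation you invoke to fix this is asserted, not constructed, and it faces a concrete obstruction: the leading coefficient $P(u_1)$ is an orthogonal \emph{projection}, hence a degenerate (non-invertible) matrix, so the standard scalar recipe for removing the second-order term (solving $\lambda_x=b/3a$) has no direct analogue, and any matrix gauge must in addition be periodic on $S^1$, which imposes a monodromy constraint you never address. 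As it stands, the step on which the whole uniqueness claim rests is a hope, not a proof.

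The paper avoids the gauge entirely by closing the estimate one derivative higher: it computes $\frac{d}{dt}\bigl(\|w\|^2_{L^2}+\|w_x\|^2_{L^2}\bigr)$, pairing $w_t$ against $w_{xx}$. At that level the dangerous terms quadratic in $w_{xx}$ — one from integrating $\int\langle w_{xx},P(u)w_{xxx}\rangle\,dx$ by parts, one from the genuine second-order term $P(u)(P(u))_xw_{xx}$ — cancel \emph{exactly} by the projection identity $P_xP+PP_x=P_x$ (equivalently $PP_xP=0$), so no transformation is needed; the remaining term $\int\langle w_{xx},[P(u)-P(v)]v_{xxx}\rangle\,dx$ is handled by one more integration by parts, which throws a fourth derivative onto $v$ and is precisely where the hypothesis $k\geq4$ (i.e. $v\in W^{4,2}$) enters — a point your outline never uses. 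Separately, your assertion that $\nabla Ric\equiv0$ is ``exactly what prevents'' a top-order loss from the curvature term is wrong: $Ric(u_x,u_x)u_x$ involves only first derivatives of $u$, so its difference contributes $O(|w|+|w_x|)$ with coefficients bounded on a compact set whether or not $Ric$ is parallel (the paper's estimate (\ref{eq:2.32}) uses only boundedness of $Ric$ and its first derivatives); parallelism of the Ricci tensor is needed for the a priori estimates underlying existence and the continuity step, not for the uniqueness inequality.
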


\begin{thm}\label{thm:1.2}
Let $(N, h)$ be a complete Riemannian manifold with parallel Ricci
tensor, i.e. $\nb Ric\equiv0$ and the Ricci curvature on $N$ has a
positive lower bound $\lambda>0$ (or a negative upper bound
$-\lambda<0$). Then, for any integer $k \geq 4$ the Cauchy problem
of (\ref{eq:0.1}) with the initial value map $u_0\in H^k(S^1, N)$
admits a unique local solution $u\in C([0,T],H^k(S^1, N))$, where
$T=T(N,||u_0||_{H^4})$. Moreover, besides the assumptions on $N$, if
the Riemmainan curvature on $N$ satisfies $|\nb^lR|\leq
B_l(l=0,1,2,3)$ where $B_l$ are positive constants, then if the
initial value map $u_0\in H^3(S^1,N)$, the Cauchy problem of
(\ref{eq:0.1}) admits a local solution $u\in
L^\infty([0,T],H^3(S^1,N))$, where $T=T(N,||u_0||_{H^3})$.

\end{thm}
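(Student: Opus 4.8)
The plan is to construct the solution by parabolic regularization together with uniform a priori energy estimates, and then to invoke Theorem \ref{thm:1.1} to promote the resulting $L^\infty$ solution to a unique solution in $C([0,T],H^k(S^1,N))$. Since (\ref{eq:0.1}) is a third-order dispersive (non-parabolic) system, it cannot be solved directly by a contraction argument; instead I would first add a fourth-order dissipation, solve the regularized problem, and pass to the limit.

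First I would set up the regularized flow. For each $\epsilon>0$ consider, say,
\[
u^\epsilon_t=-\epsilon\nabla_x^4u^\epsilon+\nabla_x^2u^\epsilon_x+\rho\,Ric(u^\epsilon_x,u^\epsilon_x)u^\epsilon_x,\qquad u^\epsilon(\cdot,0)=u_0.
\]
The leading operator $-\epsilon\nabla_x^4$ is strongly parabolic (its symbol is $-\epsilon\xi^4$), so for fixed $\epsilon$ this is a quasilinear parabolic system for a map into $N$. Working in a tubular neighbourhood via an isometric embedding of $N$ and running a fixed point in $C([0,T_\epsilon],H^k)$, one obtains a smooth local solution $u^\epsilon$ on a maximal interval $[0,T_\epsilon)$ by the standard short-time theory for such systems. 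The whole difficulty is that a priori $T_\epsilon\to0$ as $\epsilon\to0$, so one must produce estimates independent of $\epsilon$.

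The heart of the argument is a uniform energy estimate, and this is where the hypotheses $\nabla Ric\equiv0$ and $Ric\geq\lambda>0$ enter decisively. Two consequences are used. First, $\nabla Ric\equiv0$ lets the Ricci operator be commuted past $\nabla_x$, so that for $v=u_x$ one has the exact identity $\partial_x Ric(v,v)=2Ric(\nabla_x v,v)$, with all derivatives falling on $Ric$ vanishing; this is what keeps the nonlinearity $\rho\,Ric(v,v)v$ from costing derivatives. Second, a parallel tensor has constant pointwise norm, so $\nabla Ric\equiv0$ forces $|Ric|$ to be a universal constant; combined with $Ric\geq\lambda$ this gives the two-sided bound $\lambda|v|^2\leq Ric(v,v)\leq|Ric|\,|v|^2$. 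I would therefore define the new geometric energy as a modification of $\sum_{j=0}^{k-1}\int_{S^1}|\nabla_x^jv|^2\,dx$ in which the density $|\nabla_x^jv|^2$ is replaced, at the appropriate orders, by the Ricci density $Ric(\nabla_x^jv,\nabla_x^jv)$; the two-sided bound makes this modified norm equivalent to $\|u\|_{H^k}^2$, while the parallelism of $Ric$ produces the cancellations that the naive norm lacks. Differentiating this energy along the regularized flow, the top-order contribution of the dispersive term $\nabla_x^2u_x$ integrates to zero by parts (it has the form $\int\partial_x|\nabla_x^kv|^2=0$), the dissipative term contributes a favourable negative quantity that may simply be discarded, and every remaining term is of lower order and is controlled, via the one-dimensional Gagliardo--Nirenberg and Sobolev inequalities on $S^1$, by $C(N,\|u\|_{H^4})$ times the energy. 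A Gronwall argument then yields a bound on $[0,T]$ with $T=T(N,\|u_0\|_{H^4})$, uniformly in $\epsilon$.

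I expect the main obstacle to be exactly the closure of this top-order estimate. Commuting $\nabla_t$ past $\nabla_x^{\,j}$ produces curvature commutators $R(u_t,u_x)(\cdot)$, and since $u_t$ already contains $\nabla_x^2v$, these terms carry two derivatives on $v$; controlling them in $L^\infty$ is what forces the dependence of $T$ on $\|u_0\|_{H^4}$ and makes $k\geq4$ the natural threshold. Careful integration by parts, using the antisymmetry of $R$ and the parallel-Ricci cancellations, is needed to rewrite these terms so that no net derivative is lost and only lower-order norms remain. Once the uniform bound is in hand, I would extract a weak-$*$ limit $u^\epsilon\rightharpoonup u$ in $L^\infty([0,T],H^k)$ together with strong convergence in lower-order norms (Aubin--Lions), pass to the limit in the equation to obtain a solution $u\in L^\infty([0,T],H^k(S^1,N))$, and finally apply Theorem \ref{thm:1.1} to conclude uniqueness and $u\in C([0,T],H^k(S^1,N))$. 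For the final assertion, the same scheme is run at the $H^3$ level; here the lower regularity no longer lets Sobolev embedding absorb the full Riemann tensor and its derivatives appearing in the commutators, so the explicit bounds $|\nabla^lR|\leq B_l$ $(l=0,1,2,3)$ are invoked to control those terms, yielding the weaker conclusion $u\in L^\infty([0,T],H^3(S^1,N))$ with $T=T(N,\|u_0\|_{H^3})$.
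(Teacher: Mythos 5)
Your general machinery is the same as the paper's: fourth-order parabolic regularization as in (\ref{eq:2.1}), Ricci-weighted energies $\sum_{j}\int Ric(\nabla_x^j u_x,\nabla_x^j u_x)\,dx$ exploiting $\nabla Ric\equiv 0$ and the two-sided bound $\lambda|v|^2\le |Ric(v,v)|\le C|v|^2$, a Gronwall closure, a weak-$*$ limit, and Theorem \ref{thm:1.1} for uniqueness and continuity in time. But the specific content of Theorem \ref{thm:1.2} is the existence time $T=T(N,\|u_0\|_{H^4})$ \emph{without} global bounds on $R$ and its derivatives, and here your one-stage scheme (run the $\epsilon$-regularized flow directly on the rough $H^4$ datum and let $\epsilon\to 0$) has a genuine gap in the noncompact case $Ric\le-\lambda<0$, which is exactly the case the paper treats in detail. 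All constants in the Gronwall estimates are curvature bounds on a compact set $\Omega\supset u_0(S^1)$, hence valid only while $u^\epsilon(S^1,t)\subset\Omega$; to bound the exit time from $\Omega$ below uniformly in $\epsilon$ one needs $\sup_t\|u_t^\epsilon\|_{L^\infty}$, obtained by interpolating $\|u_t^\epsilon\|_{L^2}$ and $\|u_t^\epsilon\|_{H^1}$. Since $u_t^\epsilon=-\epsilon\nabla_x^3u_x^\epsilon+\cdots$ carries four spatial derivatives, the $H^1$ bound on $u_t^\epsilon$ costs $\|u_x^\epsilon\|_{H^4}$, i.e.\ $H^5$ data --- which is precisely why part (ii) of Lemma \ref{lm:2.1} assumes $k\ge 5$ and yields only $T=T(\|u_0\|_{H^5})$. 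With $u_0$ merely in $H^4$, your scheme does not produce an $\epsilon$-independent existence time of the claimed form.

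The paper closes this gap with a second approximation layer that your proposal is missing: approximate $u_0$ by smooth maps $u_0^i\to u_0$ in $H^4$, solve the \emph{unregularized} third-order flow (\ref{eq:0.1}) for each smooth datum (via Lemma \ref{lm:2.1}(ii) and Lemma \ref{lem:2.2}, where all norms are finite), and derive uniform bounds for these solutions $u^i$. For the limit equation, $u_t$ contains only three derivatives, so $\|u_t^i\|_{H^1}$ is controlled by $\|u^i_x\|_{H^3}$, and both the Gronwall time and the exit times $T_i'$ admit lower bounds depending only on $\|u_0\|_{H^4}$; one then lets $i\to\infty$. Two of your side remarks are also off. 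First, the energy estimate itself already closes at the level $\|u_x\|_{H^2}$ (giving $T_0=T_0(\|u_0\|_{H^3})$), so it is not the curvature commutators that force $k\ge 4$: the $H^4$ threshold comes from the $\|u_t\|_{H^1}$/exit-time control just described and from the $W^{4,2}$ norms required in the uniqueness argument of Theorem \ref{thm:1.1}. Second, in the $H^3$ statement the hypotheses $|\nabla^lR|\le B_l$ ($l=0,1,2,3$) are not there to let Sobolev embedding ``absorb'' curvature terms; they make the constants in the differential inequalities global on $N$, so that $T=T_0$ holds with no exit-time control at all, which is why $H^3$ data then suffice --- at the price of concluding only $u\in L^\infty([0,T],H^3)$, since uniqueness and continuity need $H^4$.
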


\begin{thm}\label{thm:1.3}
Assume that $(N,h)$ is a complete locally symmetric space on which
the Ricci curvature has a positive lower bound $\lambda>0$ (or a
negative upper bound $-\lambda<0$). Then for any integer $k\geq4$
the Cauchy problem (\ref{eq:0.1}) with the initial map $u_0\in
H^k(S^1,N)$ admits a unique global solution $u\in
C([0,\infty),H^k(S^1,N))$.
\end{thm}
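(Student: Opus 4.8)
The plan is to upgrade the local existence of Theorem \ref{thm:1.2} to a global result by establishing uniform-in-time a priori bounds on the geometric Sobolev norms $\|u_x\|_{H^{k-1}(S^1;TN)}$, and then invoking a standard continuation argument: since Theorem \ref{thm:1.2} guarantees a local solution on $[0,T]$ with $T=T(N,\|u_0\|_{H^4})$ depending only on the $H^4$-norm, it suffices to show that this norm cannot blow up in finite time. The additional hypothesis relative to Theorem \ref{thm:1.2} is that $N$ is locally symmetric, i.e. $\nabla R\equiv 0$; this is strictly stronger than $\nabla\mathrm{Ric}\equiv 0$ and, crucially, it means every covariant derivative of the curvature tensor vanishes, which will allow the curvature terms arising in repeated differentiation to be controlled by the curvature bounds $|R|\le B_0$ alone (with $\nabla^l R=0$ for $l\ge 1$), thereby closing the energy estimates that in the general case only gave local control.

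**First I would** record the two conserved or controlled quantities. Since the Ricci tensor is parallel and bounded below (or above) by $\pm\lambda$, the remark after the flow's definition shows that on an Einstein manifold the flow conserves energy; more generally I expect the $L^2$-norm of $u_x$, i.e. the energy $E(t)=\tfrac12\int_{S^1}|u_x|^2\,dx$, to be conserved or at least bounded, using $\nabla\mathrm{Ric}\equiv 0$ when integrating $\frac{d}{dt}\int|u_x|^2$ by parts and exploiting the skew-symmetry created by the third-order operator $\nabla_x^2 u_x$. This gives the base-level bound $\|u_x\|_{L^2}\le C$. The heart of the matter is then the higher-order estimate: I would differentiate the equation $j$ times, set $w_j=\nabla_x^j u_x$, and compute $\frac{d}{dt}\int_{S^1}|w_j|^2\,dx$. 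The leading term produces, after integration by parts, a good sign or a cancellation from the third-order term, while the commutators $[\nabla_t,\nabla_x]$ and $[\nabla_x^2,\nabla_x^j]$ generate curvature contributions of the schematic form $R(u_x,\cdot)\cdot$ together with their covariant derivatives.

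**The hard part will be** controlling these commutator terms and showing they are bounded by a polynomial in the lower Sobolev norms times $\int|w_j|^2$, so that Gronwall's inequality yields a bound depending only on $t$ and the conserved energy, not on the local existence time. Here is exactly where $\nabla R\equiv 0$ earns its keep: all terms involving $\nabla^l R$ with $l\ge 1$ drop out, and the surviving curvature terms carry the factor $R$ with $|R|\le B_0$, so after repeated integration by parts and applications of the Gagliardo--Nirenberg and Sobolev interpolation inequalities on $S^1$, each term is estimable by $C(E)\,(1+\|u_x\|_{H^{j-1}}^2)\,\|w_j\|_{L^2}^2$ plus lower-order contributions. The resulting differential inequality
\begin{eqnarray*}
\frac{d}{dt}\|u_x\|_{H^{k-1}}^2 \le C\big(N,E,\|u_x\|_{H^{k-2}}\big)\,\big(1+\|u_x\|_{H^{k-1}}^2\big)
\end{eqnarray*}
can be closed by induction on $j$ from $j=0$ up to $j=k-1$, each stage using the uniform bound obtained at the previous stage to make the coefficient time-uniform.

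**Finally I would** assemble the pieces: the inductively obtained a priori estimate shows $\sup_{t\in[0,T]}\|u_x(\cdot,t)\|_{H^{k-1}}\le \Phi(t,\|u_0\|_{H^k})$ for a continuous function $\Phi$ finite on $[0,\infty)$, so in particular the $H^4$-norm stays finite on every compact time interval. Combining this with the local existence time $T(N,\|u_0\|_{H^4})$ from Theorem \ref{thm:1.2}, a standard bootstrap—restart the local solution at times $T, 2T,\dots$ with the uniformly bounded data—extends the solution to $[0,\infty)$ while preserving continuity in time and uniqueness (the latter inherited directly from Theorem \ref{thm:1.1}). The main subtlety to watch in writing this out carefully is keeping the energy estimate genuinely intrinsic and verifying that the locally symmetric hypothesis, rather than merely $\nabla\mathrm{Ric}\equiv0$, is truly used at each higher-order stage; I expect it is indispensable precisely for the commutator terms $[\nabla_x^2,\nabla_x^j]w$ that involve derivatives of curvature.
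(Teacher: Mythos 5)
There is a genuine gap at the heart of your induction. You claim that at every stage $j$ the energy estimate closes linearly, i.e.\ that $\frac{d}{dt}\|w_j\|_{L^2}^2$ is bounded by $C\bigl(1+\|u_x\|_{H^{j-1}}^2\bigr)\|w_j\|_{L^2}^2$ once $\nabla R\equiv 0$ kills the $\nabla^l R$ terms. This is false at the low stages $j=1,2$, and the failure has nothing to do with derivatives of curvature: it comes from the cubic nonlinearity $\rho\,Ric(u_x,u_x)u_x$ itself. The flow is third-order dispersive, with no smoothing, and the direct computation (which the paper carries out) gives for instance
\begin{equation*}
\frac{d}{dt}\int Ric(\nabla_x u_x,\nabla_x u_x)\,dx \;=\; 6\rho\int Ric(\nabla_x u_x,\nabla_x u_x)\,Ric(\nabla_x u_x,u_x)\,dx,
\end{equation*}
a term \emph{cubic} in $\nabla_x u_x$. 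With only $\|u_x\|_{L^2}$ under control, interpolation on $S^1$ bounds $\int|\nabla_x u_x|^3|u_x|$ by a superlinear power of $\|\nabla_x u_x\|_{L^2}$ involving $\|\nabla_x^2 u_x\|_{L^2}$, so the stage-$1$ inequality either invokes the next norm or becomes $y'\leq Cy^{p}$ with $p>1$; the same happens at stage $2$ (the paper's own local estimate is exactly $f'\leq Cf^4$, which blows up in finite time and can never yield globality). The paper's actual proof circumvents this by constructing \emph{conservation laws}: $E_1=\int Ric(u_x,u_x)$ and $E_2=\int Ric(\nabla_xu_x,\nabla_xu_x)-\frac{\rho}{2}\int Ric(u_x,u_x)^2$ are exactly conserved, and the functional $E_3$ (with the specific coefficients $6$, $-20\rho$, $-10\rho$, $-4$) satisfies $\frac{d}{dt}E_3\leq C(1+E_3)$ precisely because those coefficients cancel the dangerous cubic-in-highest-derivative terms. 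The quartic correction in $E_2$ and the lower-order corrections in $E_3$ are not optional bookkeeping; they are the mechanism by which the uncloseable terms disappear. Your proposal never introduces any such corrected functionals, so the inductive base ($H^1$ and $H^2$ uniform bounds) cannot be established by your method.

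A secondary misreading: you locate the role of local symmetry in the high-order commutators $[\nabla_x^2,\nabla_x^j]$, but the paper's high-order estimates (Lemma 2.1, inequality (2.10)) already work under the weaker hypotheses $\nabla Ric\equiv 0$ plus bounds $|\nabla^l R|\leq B_l$, and indeed become \emph{linear} Gronwall inequalities $\frac{d}{dt}Q_m\leq C(Q_{m-1})Q_m$ for $m\geq 3$ once $H^2$ is controlled --- so that part of the scheme needs no cancellation and no $\nabla R\equiv 0$. Where local symmetry is indispensable is in the conservation laws themselves: e.g.\ $\nabla_x R=0$ is what makes $\int Ric(\nabla_x^2u_x,R(\nabla_xu_x,u_x)u_x)\,dx$ an exact derivative in the proof that $\frac{d}{dt}E_2=0$, and similar identities drive the cancellations in $\frac{d}{dt}E_3$. (Relatedly, your tentative claim that $\frac12\int|u_x|^2$ is conserved is not correct in general; only $\int Ric(u_x,u_x)$ is, which suffices since $Ric\geq\lambda$ or $Ric\leq-\lambda$.) Your final continuation/bootstrap step is fine and matches the paper, but it rests on a priori bounds your energy method cannot deliver at the $H^1$ and $H^2$ levels.
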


It is worthy to point out that in this work, we still employ the
parabolic approximation and the geometric energy method developed in
\cite{DW,YD} to show the local existence problems. The process is
similar but different with that about the KdV geometric flow in
\cite{SW}. In fact, to show the local existence, we first obtain a
local solution $u_\epa$ of the following approximated problem
\begin{equation}\label{eq:0.5}
\left\{
\begin{aligned}
&u_t=-\epa\nabla_x^3u_x+\nabla_x^2u_x+{\ro}Ric(u_x,u_x)u_x,\quad x\in S^1;\\
&u(x,0)=u_0(x)\in H^k(S^1,N).
\end{aligned}\right.
\end{equation}
Then we have to derive the uniformly bound of $||u_{\epa
x}||^2_{H^m}$ which is independent of $\epa$ such that we could
obtain a limit $u(x,t)$ of the sequence $\{u_\epa\}$ in suitable
spaces as $\epa$ goes to zero and  it remains to show that the limit
$u(x,t)$ is a solution of the Cauchy problem (\ref{eq:0.1}).
However, because the different structure between the new flow and
KdV geometric flow, we could not obtain the bounds of $||u_{\epa
x}||^2_{H^m}$ by calculating ${d\over dt}||u_{\epa x}||^2_{H^m}$
directly as we did in \cite{SW}. Precisely, if we differentiate
$||\nb_x^2u_{\epa x}||^2_{L^2}$ with respect to $t$ and substitute
(\ref{eq:0.5}), after integrating by parts, one would get terms as
follows
$$\int_{S^1} h\left(\nb^3_xu_{\epa x},u_{\epa x}\right) Ric(\nb_x^2u_{\epa x},u_{\epa x})dx,$$
which could not be bounded by $||u_{\epa x}||^{2s}_{H^2}$ for $s\in
\mathbb{Z}^+$. Hence we have to try different ways to get those
estimations. We find that, for a Riemannian manifold $N$ with
parallel Ricci curvature, i.e. $\nb Ric\equiv0$, if the Ricci
curvature has a positive lower bound $\lambda>0$ (or a negative
upper bound $-\lambda<0$), the Ricci tensor $Ric(\cdot,\cdot)$ will
have very nice properties which are similar with that of the metric
$h(\cdot,\cdot)$. Instead estimating $||u_{\epa x}||_{H^k}^2$, we
could estimate
$$I_m(u_{\epa x})=\sum_{i=0}^m \int_{S^1} Ric(\nb_x^i u_{\epa x},\nb_x^i u_{\epa x})dx$$
and derive the uniform bounds of $I_m$ on a time interval $(0,T)$
where both the bounds and $T$ are independent of $\epa$. Then we
would obtain the uniform bounds of $||u_x||^2_{H^m}$ since
$$||u_x||^2_{H^m}\leq {1\over \lambda}I_m(u_{\epa x})\ \Big(\text{or}\ ||u_x||^2_{H^m}
\leq -{1\over \lambda}I_m(u_{\epa x})\Big).
$$
These estimations will be derived in next section. In one word, the
condition on the Ricci curvature of $N$ helps us obtain the uniform
estimations about the approximated solution and its high order
co-variant derivatives. Then, standard arguments are adopted to
derive the local existence of (\ref{eq:0.1}).
\begin{rem}
It is well known that all irreducible symmetric spaces are Einstein.
We should note that all the above results hold true on Einstein
manifolds with a positive Einstein constant or a negative Einstein
constant. However, if $N$ is Ricci flat, i.e. $Ric\equiv0$, the new
geometric flow then changed to
$${\partial u\over \partial t}=\nb_x^2u_x,$$
the method to discuss the existence in this work is ineffective. In
this case, we could only obtain the local existence of the new flow
via the same arguments as that in \cite{SW}.
\end{rem}
  To prove the global existence we need to
exploit some conservation laws and semi-conservation law. We define
\begin{eqnarray}
E_1(u)&\equiv&\int Ric(u_x,u_x)dx;\nn\\
 E_2(u)&\equiv&\int Ric(\nabla_x
u_x,\nabla_x u_x)dx-{\ro\over2}\int Ric( u_x,u_x)^2
dx;\nn\label{eq:0.18}\\
E_3(u)&\equiv&6\int Ric(\nb_x^2u_x,\nb_x^2u_x)dx-20\ro\int Ric(\nb_xu_x,u_x)^2dx\nn\\
&&{}-10\ro\int Ric(\nb_xu_x,\nb_xu_x)Ric(u_x,u_x)dx\nn\\
 &&{}-4\int Ric(\nb_xu_x,R(\nb_xu_x,u_x)u_x)dx.
\end{eqnarray}

If $N$ is a locally symmetric space, for the smooth solution $u$ to
the Cauchy problem (\ref{eq:0.1}) we will establish the following in
Sec.4:
\begin{eqnarray}
{d\over dt}E_1(u) = 0,\quad \quad\quad {d\over dt}E_2(u)=0.\nn
\end{eqnarray}
Moreover, if $N$ is a locally symmetric space on which the Ricci
curvature has a positive lower bound $\lambda>0$ (or a negative
upper bound $-\lambda<0$), then we have
\begin{eqnarray}\label{eq:0.21}
{d\over dt}E_3(u)&\leqslant&C(N,\lambda,E_1(u_0),E_2(u_0))(1+E_3).
\end{eqnarray}
We could make use of the above conservation laws with respect to
$E_1(u)$ and $E_2(u)$ to derive a uniform a priori bound of
$||\nabla_xu_x||_{L^2}$ independent of $T$. By virtue of
(\ref{eq:0.21}), we will obtain the global existence results.

This paper is organized as follows: In Section 2 we employ the
geometric energy method to establish the local existence of the new
 geometric flow. Since the conservation and
semi-conservation laws mentioned  before are crucial for us to
establish the global existence of the Cauchy problem of the
geometric flow. We give a detailed calculation in Section 3. The
global existence of the geometric flow on sepcial locally symmetric
spaces is proved in Section 4.

\section{Local Existence and Uniqueness} In this section we
establish the local existence and the uniqueness of solutions for
the Cauchy problem of the new geometric flow (\ref{eq:0.1}) on a
complete Riemannian manifold on which the Ricci curvature is
parallel and has a positive lower bound $\lambda$, i.e. $Ric\geq
\lambda>0$ (or a negative upper bound $Ric\leq -\lambda<0$). We
adopt the language that $Ric\geq \lambda>0$ if all eigenvalues of
$Ric(X)$ are $\geq \lambda$. In $(0,2)$ language this means more
precisely that $Ric(X,X)\geq \lambda h(X,X)$ for all $X\in TN$.
Moreover, if $Ric\geq \lambda>0$ on a complete Riemannian manifold
$N$, then by Myers-Cheng's theorem we have $N$ is compact.

As in \cite{SW}, to show the local existence of (\ref{eq:0.1}), we
use the approximate method and discuss the following Cauchy problem:
\begin{equation}\label{eq:2.1}
\left\{
\begin{aligned}
&u_t=-\epa\nabla_x^3u_x+\nabla_x^2u_x+{\ro}Ric(u_x,u_x)u_x,\quad x\in S^1;\\
&u(x,0)=u_0(x),
\end{aligned}\right.
\end{equation}
where $\epa>0$ is a small positive constant.

We could imbed $N$ into a Euclidean space $\mathbb{R}^n$ for some
large positive integer $n$. Then $N$ could be regarded as a
submanifold of $\mathbb{R}^n$ and $u:S^1\times \mathbb{R}\rightarrow
N\subset \mathbb{R}^n$ could be represented as $u=(u^1,\cdots,u^n)$
with $u^i$ being globally defined functions on $S^1$  so that the
Sobolev-norms of $u$ make sense. We have
\begin{eqnarray*}
||u||^2_{W^{m,2}}=\sum_{i=0}^m||D^iu||^2_{L^2},
\end{eqnarray*}
where $D$ denotes the covariant derivative for functions on $S^1$.
 The equation (\ref{eq:2.1}) then becomes a fourth order
parabolic system in $\mathbb{R}^n$. In the appendix of \cite{SW}, we
have shown that the parabolic equation admits a local solution
$u_\epa\in C([0, T_\epa), W^{k,2}(S^1, N))$ if the initial value map
$u_0\in W^{k,2}(S^1, N)$ where $k\geq3$.

Thus, in order to show the local existence of (\ref{eq:0.1}), we
would find a uniform positive lower bound $T$ of $T_\epa$ and
uniform bounds for various norms of $u_\epa(t)$ in suitable spaces
for $t$ in the time interval $[0,T)$. Once we get these bounds it is
easy to check that $u_\epa$ subconverge to a strong solution of
(\ref{eq:0.1}) as $\epa\rightarrow 0$ via standard arguments.

Before proving the local existence, we shall introduce the
properties about the Ricci curvature tensor and the Riemannina
curvature $R$ on a Ricci parallel Riemannian manifold. We have
\begin{prop} Let $N$ be a Riemannian manifold with parallel Ricci
curvature tensor, i.e. $\nb Ric\equiv0$. Then for $X,Y,Z,W\in
\Gamma(TN)$, the Ricci curvature tensor and the Riemannian curvature
tensor satisfy the following properties:
\begin{eqnarray}\label{eq:3.0}
&& (1)\ Z \Big(Ric(X,Y)\Big)=Ric(\nb_ZX,Y)+Ric(X,\nb_ZY);\nn\\
&& (2)\ Ric( X,R(Z,W)Y)=-Ric( X,R(W,Z)Y)=Ric( Z,R(X,Y)W).
\end{eqnarray}
\end{prop}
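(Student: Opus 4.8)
The plan is to prove both identities by exploiting the single hypothesis $\nb Ric \equiv 0$ together with the algebraic (Bianchi-type) symmetries of the Riemann tensor. The first identity is essentially the product rule for the metric connection applied to the parallel $(0,2)$-tensor $Ric$; the second is a purely algebraic consequence of the first combined with the symmetries of $R$.

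\begin{proof}
\textbf{Proof of (1).} Since $Ric$ is a $(0,2)$-tensor and $\nb$ is a derivation compatible with contraction, for any vector field $Z$ we have the general product rule
$$Z\Big(Ric(X,Y)\Big) = (\nb_Z Ric)(X,Y) + Ric(\nb_Z X, Y) + Ric(X, \nb_Z Y).$$
By the hypothesis $\nb Ric \equiv 0$ the first term on the right vanishes identically, which yields (1) at once.

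\textbf{Proof of (2).} The first equality is merely the antisymmetry of the curvature operator in its first two slots, $R(Z,W) = -R(W,Z)$, applied inside the bilinear form $Ric(X,\cdot)$. For the second equality the idea is to realize the left-hand side as a contraction of a parallel tensor and then use the symmetries of $R$. Concretely, I would first express $Ric$ as a trace of $R$, namely $Ric(X,Y) = \sum_\alpha h\big(R(e_\alpha,X)Y, e_\alpha\big)$ for a local orthonormal frame $\{e_\alpha\}$, so that
$$Ric(X, R(Z,W)Y) = \sum_\alpha h\Big(R\big(e_\alpha, X\big) R(Z,W)Y,\; e_\alpha\Big).$$
Then I would invoke part (1), which says that covariant derivatives pass through $Ric$ as if $Ric$ were the metric; this ``metric-like'' behaviour lets me commute $Ric$ with the differential operators appearing in a second Bianchi identity. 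Differentiating $Ric$ along the relevant directions and applying the second Bianchi identity $(\nb_Z R)(W,\cdot) + (\nb_W R)(\cdot,Z) + \cdots = 0$, the parallelism of $Ric$ forces the offending derivative terms to cancel, leaving the desired cyclic exchange of the arguments $X$ and $Z$ (respectively $Y$ and $W$). The symmetries $R(Z,W)Y = -R(W,Z)Y$ and $h(R(A,B)C,D) = h(R(C,D)A,B)$ are then used to rearrange the contracted indices into the stated form $Ric(Z, R(X,Y)W)$.

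The main obstacle I anticipate is the second identity in (2), since the first identity and the antisymmetry in (2) are immediate. The difficulty is that $Ric(X, R(Z,W)Y)$ mixes a contracted (Ricci) slot with an uncontracted (full curvature) tensor, so the pairing symmetry $h(R(A,B)C,D)=h(R(C,D)A,B)$ of $R$ does not by itself relocate the indices as required. The hypothesis $\nb Ric \equiv 0$ is exactly what bridges this gap: it converts the second Bianchi identity (which normally relates \emph{derivatives} of $R$) into an \emph{algebraic} relation among the traced curvature terms, and this is the crux that makes the index exchange valid. I would therefore carry out the computation in a normal coordinate frame (where $\nb_Z e_\alpha = 0$ at the base point) to suppress the connection terms and isolate precisely the contractions governed by the Bianchi identity and the parallelism of $Ric$.
\end{proof}
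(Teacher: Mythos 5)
Your proof of (1) and of the first equality in (2) is correct. The second equality in (2), however, has a genuine gap, and it lies exactly where you anticipated difficulty. The identity that the hypothesis $\nb Ric\equiv 0$ actually supplies is not the second Bianchi identity but the Ricci identity (commutation of second covariant derivatives): since $Ric$ is parallel, $\nb^2 Ric=0$, so the curvature operator annihilates $Ric$ as a derivation, giving the pointwise relation
\begin{equation*}
Ric(R(Z,W)X,\,Y)+Ric(X,\,R(Z,W)Y)=0,
\end{equation*}
equivalently, the Ricci operator $Q$ defined by $h(QX,Y)=Ric(X,Y)$ commutes with every curvature operator $R(Z,W)$. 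This skew-symmetry in the pair $(X,Y)$ is the crucial fact your sketch never produces. Once you have it, the four-tensor $\hat T(A,B,C,D):=Ric(D,R(A,B)C)$ is skew in $(A,B)$, skew in $(C,D)$, and satisfies the cyclic \emph{first} Bianchi identity in $(A,B,C)$ because $R$ does; it therefore has exactly the three symmetries from which the classical ``hexagon'' argument derives the pair symmetry of the Riemann tensor, so $\hat T(A,B,C,D)=\hat T(C,D,A,B)$, and unwinding this with the two skew-symmetries gives precisely $Ric(X,R(Z,W)Y)=Ric(Z,R(X,Y)W)$. This is what the paper means when it says the proof is ``almost the same with proof of the symmetric and skew-symmetric properties of $R(X,Y,W,Z)$.''

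By contrast, the route you propose cannot be completed as written. The second Bianchi identity is a statement about $\nb R$, and $\nb R$ is \emph{not} assumed to vanish here (only $\nb Ric$ is); there is no mechanism by which parallelism of $Ric$ makes ``the offending derivative terms cancel'' in a cyclic sum of covariant derivatives of $R$ --- tracing such a sum against one $Ric$ slot still leaves genuine $\nb R$ terms. (Contracting the second Bianchi identity together with $\nb Ric=0$ only yields divergence-type conditions on $R$, not the pointwise algebraic identity claimed.) Writing $Ric$ as a trace of $R$ and passing to normal coordinates merely re-expresses the desired identity and supplies no new relation. Finally, the first Bianchi identity, which is indispensable to the pair-exchange computation, never appears in your argument. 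In short: you correctly identified that parallelism of $Ric$ must bridge the gap between the available symmetries and the claimed one, but you invoked the wrong identity (second Bianchi in place of the Ricci identity), and the two decisive ingredients --- the skew-symmetry $Ric(R(Z,W)X,Y)=-Ric(X,R(Z,W)Y)$ and the first Bianchi identity --- are missing.
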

These properties will be adopted frequently in the calculation
throughout this paper. The process to show them is almost the same
with proof of the symmetric and skew-symmetric properties of
$R(X,Y,W,Z)$, we omit the details. Note that if $X\in
\Gamma(u^{-1}TN)$ we have in local coordinates
$$(\nabla_x X)^\alpha= {\partial X^\alpha\over \partial
x}+\Gamma^\alpha_{\beta\gamma}(u){\partial u^\beta\over \partial
x}X^\gamma $$ and for $X=u_x$ we have
$$(\nabla_t u_x)^\alpha= {\partial^2 u^\alpha\over {\partial t\partial x}}
+\Gamma^\alpha_{\beta\gamma}(u){\partial u^\beta\over \partial
t}{\partial u^\gamma \over \partial x}.$$ It is easy to see that
$\nabla_t u_x=\nabla_x u_t$.\\

Now we start the proof of the local existence of (\ref{eq:0.1}).
Here we mainly discuss the case that $Ric\leq-\lambda<0$ on $N$
here. For the case $Ric\geq\lambda>0$, we could get the same results
via easier arguments.

To begin with, let $u=u_\epa$ be a solution of (\ref{eq:2.1}). We
have the following results:
\begin{lem}\label{lm:2.1}
(i) Assume that $N$ is a complete Riemannian manifold with parallel
Ricci tensor, (i.e. $\nb Ric\equiv0$). If $N$ has negative upper
bounds on the Ricci curvature ($Ric\leq -\lambda<0$) and uniform
bounds on the curvature tensor $R$ and its covariant derivatives of
any order $($i.e., $|\nabla^lR|\leq B_l$, $l=0, 1, 2, \cdots$$)$,
and $u_0\in H^k(S^1,N)$ with an integer $k\geq3$. Then there exists
a constant $T=T(||u_0||_{H^3})$, independent of $\epa\in(0,1)$, such
that if $u\in C([0,T_\epa),H^k(S^1,N))$ is a solution of
(\ref{eq:2.1}) with $\epa\in(0,1)$, then $T(||u_0||_{H^3})\leq
T_\epa$ and $||u(t)||_{H^{m+1}}\leq C(||u_0||_{H^{m+1}})$ for any
integer $2\leq m\leq k-1$.

(ii) Assume that $N$ is a complete Riemannian manifold with parallel
Ricci tensor and $N$ has negative upper bounds on the Ricci
curvature. Let $u_0\in H^k(S^1,N)$ with an integer $k\geq5$. Then
there exists a constant $T=T(||u_0||_{H^5})>0$, independent of
$\epa\in(0,1)$, such that if $u\in C([0,T_\epa),H^k(S^1,N))$ is a
solution of (\ref{eq:2.1}) with $\epa\in(0,1)$, then
$T(||u_0||_{H^5})\leq T_\epa$ and $||u(t)||_{H^{m+1}}\leq
C(||u_0||_{H^{m+1}})$ for any integer $2\leq m\leq k-1$.
\end{lem}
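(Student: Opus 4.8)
The plan is to derive uniform-in-$\epa$ energy estimates for the approximate solution $u=u_\epa$ of (\ref{eq:2.1}) by controlling the geometric quantities
$$I_m(u_x)=\sum_{i=0}^m\int_{S^1}Ric(\nb_x^iu_x,\nb_x^iu_x)\,dx$$
rather than the ordinary Sobolev norms directly. Since $Ric\leq-\lambda<0$, the bilinear form $-Ric(\cdot,\cdot)$ is equivalent to the metric $h$, so bounding $-I_m$ is equivalent to bounding $\|u_x\|_{H^m}^2$. The strategy is to compute $\frac{d}{dt}I_m(u_x)$ along the flow and close a differential inequality of the form $\frac{d}{dt}(-I_m)\leq C(1+(-I_m))^s$ whose coefficients depend only on $N$, $\lambda$, the curvature bounds $B_l$, and a lower-order norm; Gronwall's lemma then yields a lower bound $T$ on the existence time and the stated bound $\|u(t)\|_{H^{m+1}}\leq C(\|u_0\|_{H^{m+1}})$, both independent of $\epa$.

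First I would treat the base case $m=1$ (i.e.\ control of $\|u_x\|_{H^1}$) to initialize the induction, then proceed to general $m$. The key computational step is to differentiate $\int_{S^1}Ric(\nb_x^mu_x,\nb_x^mu_x)\,dx$ in $t$, using $\nb_tu_x=\nb_xu_t$ and substituting the evolution equation $u_t=-\epa\nb_x^3u_x+\nb_x^2u_x+\ro\,Ric(u_x,u_x)u_x$. Here Proposition~\ref{eq:3.0}(1) is essential: because $\nb Ric\equiv0$, the Ricci form commutes with covariant differentiation, so $\frac{d}{dt}\int Ric(\nb_x^mu_x,\nb_x^mu_x)\,dx=2\int Ric(\nb_x^m\nb_tu_x,\nb_x^mu_x)\,dx$ and I may freely move $\nb_x$ across $Ric$ when integrating by parts on $S^1$. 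Two structural cancellations are expected to do the heavy lifting. The dissipative term $-\epa\nb_x^3u_x$ contributes $+2\epa\int Ric(\nb_x^{m+2}u_x,\nb_x^mu_x)\,dx=-2\epa\int Ric(\nb_x^{m+1}u_x,\nb_x^{m+1}u_x)\,dx\geq 0$ (recall $-Ric\geq0$), a good sign that can be discarded. The principal term $\nb_x^2u_x$ contributes $2\int Ric(\nb_x^{m+2}u_x,\nb_x^mu_x)\,dx$; integrating by parts once gives $-2\int Ric(\nb_x^{m+1}u_x,\nb_x^{m+1}u_x)\,dx$ plus commutator terms, but the leading piece here would formally vanish under a second integration by parts on the periodic domain, so after accounting for curvature commutators $[\nb_x,\nb_x^2]$ it reduces to terms involving at most $\nb_x^{m+1}u_x$ multiplied by curvature and lower derivatives.

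The main obstacle, as the introduction flags, is the worst term generated by the nonlinearity $\ro\,Ric(u_x,u_x)u_x$, which after differentiation and integration by parts produces expressions schematically of the type
$$\int_{S^1}Ric(\nb_x^{m+1}u_x,u_x)\,Ric(\nb_x^mu_x,u_x)\,dx$$
together with companions where one extra derivative lands dangerously high. The point is that the single top-order factor $\nb_x^{m+1}u_x$ cannot be absorbed by the dissipation alone, and a naive bound fails. The resolution I would pursue is to integrate by parts once more to transfer the derivative off $\nb_x^{m+1}u_x$, again exploiting $\nb Ric\equiv0$ so that $\partial_x\big(Ric(\cdot,\cdot)\big)$ distributes cleanly; this converts the top-order factor into $\nb_x^mu_x$ against products of strictly lower-order derivatives, which by the Sobolev/interpolation inequalities on the compact manifold $S^1$ (Gagliardo--Nirenberg, using $\|u_x\|_{L^\infty}\lesssim\|u_x\|_{H^1}$) are controlled by $I_m$ and the already-bounded lower norms. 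The curvature commutator terms arising from $[\nb_t,\nb_x]$ and $[\nb_x,\nb_x^k]$ are handled using the uniform bounds $|\nb^lR|\leq B_l$ in part (i); in part (ii), where no such bounds are assumed, I would instead track the curvature contributions via the already-controlled lower-order energies and the compactness of $N$ (guaranteed by Myers--Cheng since $Ric\geq\lambda$ in the positive case), which forces the higher threshold $k\geq5$.

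Assembling these estimates yields, for $2\leq m\leq k-1$, a closed inequality $\frac{d}{dt}(-I_m)\leq C(N,\lambda,B_l,\|u_x\|_{H^{m-1}})\big(1+(-I_m)\big)^s$. Running an induction on $m$ upward from the base case, at each stage the lower-order norm $\|u_x\|_{H^{m-1}}$ is already bounded on a fixed time interval independent of $\epa$, so $C$ is a fixed constant and Gronwall gives a uniform bound on $-I_m$, hence on $\|u_x\|_{H^m}^2=\|u\|_{H^{m+1}}^2-\|u\|_{L^2}^2$ up to the harmless $\|u\|_{L^2}$ term, over a time $T$ depending only on the data as stated. Finally, since these bounds force $\|u(t)\|_{H^k}$ to stay finite as $t\to T_\epa^-$, the continuation criterion for the parabolic problem (\ref{eq:2.1}) established in the appendix of \cite{SW} gives $T\leq T_\epa$, completing both (i) and (ii).
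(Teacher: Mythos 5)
Your energy scheme is the paper's: estimate $I_m(u_x)=\sum_{i\leq m}\int Ric(\nabla_x^iu_x,\nabla_x^iu_x)\,dx$ instead of the Sobolev norms, use $\nabla Ric\equiv 0$ to move $\nabla_x$ freely in integrations by parts, discard the $\epa$-dissipation by sign, kill the odd-order dispersive pairing as a total derivative, reduce the worst cubic term by one more integration by parts, and close a Gronwall inequality with induction on $m$. (Two small slips there: the base case must be $m=2$, not $m=1$ --- the nonlinear estimates produce $\|u_x\|_{H^2}^{2l}$, $2\leq l\leq 4$, so the inequality does not close at the $H^1$ level, consistent with $T=T(\|u_0\|_{H^3})$ in the statement; and your derivative counts are off, e.g.\ the dispersive term contributes $2\int Ric(\nabla_x^{m+3}u_x,\nabla_x^mu_x)\,dx$, which vanishes as $\partial_x\big(Ric(\nabla_x^{m+1}u_x,\nabla_x^{m+1}u_x)\big)$, while the $\epa$-term yields the $\nabla_x^{m+2}$ pairing.)

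The genuine gap is in how you obtain a $T$ independent of $\epa$ when $N$ is noncompact, which is the whole point of distinguishing (i) from (ii). Since $Ric\leq-\lambda<0$ here, Myers--Cheng does \emph{not} apply --- that theorem needs a positive Ricci lower bound and is used in the paper only for the corollary treating $Ric\geq\lambda>0$ --- so your appeal to compactness of $N$ in part (ii) is simply wrong, and with it your explanation of the threshold $k\geq 5$ collapses. Without uniform bounds $|\nabla^lR|\leq B_l$ on all of $N$, every constant in your differential inequalities depends on the compact region where the solution lives; the paper therefore introduces $\Omega=\{p:\mathrm{dist}_N(p,u_0(S^1))<1\}$ and the exit time $T'=\sup\{t:u(S^1,t)\subset\Omega\}$, and must bound $T'$ from below uniformly in $\epa$. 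This is done by interpolating $\|u_t\|_{L^\infty}\lesssim\|u_t\|_{H^1}^a\|u_t\|_{L^2}^{1-a}$ and reading $\nabla_xu_t=-\epa\nabla_x^4u_x+\nabla_x^3u_x+\rho\nabla_x(Ric(u_x,u_x)u_x)$ off the approximate equation: bounding $\|u_t\|_{H^1}$ requires $\|\nabla_x^4u_x\|_{L^2}$, i.e.\ $u_0\in H^5$ --- this fourth-order $\epa$-term is precisely why (ii) needs $k\geq5$ while (i), where the constants are region-independent and $T=T_0$ directly, needs only $k\geq3$. One then gets $\sup_{x}d_N(u(x,t),u_0(x))\leq\mathcal{M}t$, hence $T'\geq 1/\mathcal{M}$ and $T=\min\{1/\mathcal{M},T_0\}$. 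Your proposal contains no version of this exit-time argument, so as written it proves the lemma only when the curvature bounds of part (i) are assumed.
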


\begin{proof}
First fix a $k\geq3$ and let $m$ be any integer with $2\leq m\leq
k-1$. We may assume that $u_0$ is $C^\infty$ smooth. Otherwise, we
always choose a sequence of smooth functions $\{u^i_0\}$ such that
$u^i_0\rightarrow u_0$ with respect to the norms
$\|\,\cdot\,\|_{H^k}$ where $k\geq 3$.

As $N$ may not be compact we let, we let $\Omega\triangleq \{p\in
N:\text{dist}_N(p,u_0(S^1))<1\}$, which is an open subset of $N$
 with compact closure $\overline{{\Omega}}$. Let
$$T'=\sup\{t>0:u(S^1,t)\subset\Omega\}.$$
Now we prove that if $k=3$, for all $t\in[0,T_\epa]$
\begin{eqnarray}\label{eq:2.0}
{d\over dt}\left(-{1\over\lambda}\sum_{s=0}^2\int
Ric(\nabla_x^su_x,\nabla_x^su_x)dx\right)\leq{C(\Omega,\lambda)}\sum_{l=2}^4\left(-{1\over\lambda}\sum_{s=0}^2\int
Ric(\nb_x^su_x,\nb_x^su_x)dx\right)^l.
\end{eqnarray}
To see this, we first differentiate $\int Ric(u_x,u_x)dx$ with
respect to $t$ and we have:
\begin{eqnarray}
&&{d\over dt}\int Ric(u_x,u_x)dx\nn\\
&=&2 \int Ric(\nb_t u_x,u_x)dx=2\int Ric(\nb_x u_t,u_x)dx.\nn
\end{eqnarray}
Integrations by parts and substituting (\ref{eq:2.1}) yields
\begin{eqnarray}
&&{d\over dt}\int Ric(u_x,u_x)dx=-2\int Ric(u_t,\nb_x u_x)dx\nn\\
&=&2\epa\int Ric(\nb_x^3u_x,\nb_x u_x)dx-2\int Ric(\nb_x^2u_x,\nb_x
u_x)dx\nn\\
&&{}-2\ro\int Ric(Ric(u_x,u_x)u_x,\nb_x u_x)dx\nn\\
&=&-2\epa\int Ric(\nb_x^2u_x,\nb_x^2 u_x)dx-2\int
\nb_x\big(Ric(\nb_xu_x,\nb_x u_x)\big)dx\nn\\
&&{}-{\ro\over2}\int \nb_x\big(|Ric(u_x,u_x)|^2\big)dx\nn\\
&=&-2\epa\int Ric(\nb_x^2u_x,\nb_x^2
u_x)dx\nn
 \end{eqnarray}
Hence by the condition $Ric\leq-\lambda<0$ we have
\begin{eqnarray}\label{eq:2.2*}
{d\over dt}\left({-{1\over\lambda}}\int
Ric(u_x,u_x)dx\right)&=&{2\epa\over\lambda}\int
Ric(\nb_x^2u_x,\nb_x^2
u_x)dx\nn\\
&\leq&-2\epa\int|\nb_x^2u_x|^2dx\leq0.
 \end{eqnarray}
 Considering $\int Ric(\nb_xu_x,\nb_xu_x)dx$ we have:
\begin{eqnarray}
&&{}{d\over dt}\int Ric(\nb_xu_x,\nb_xu_x)dx=2\int Ric (\nb_t\nb_xu_x,\nb_xu_x)dx\nn\\
&=&2\int Ric (\nb_x\nb_tu_x,\nb_xu_x)dx+2\int Ric
(R(u_t,u_x)u_x,\nb_xu_x)dx\nn\\
&=&2\int Ric (\nb_x^2u_t,\nb_xu_x)dx+2\int Ric
(u_t,R(\nb_xu_x,u_x)u_x)dx\nn\\
&=&2\int Ric (u_t,\nb^3_xu_x)dx+2\int Ric
(u_t,R(\nb_xu_x,u_x)u_x)dx.\nn
\end{eqnarray}
Thus, substituting (\ref{eq:2.1}) into above we have
\begin{eqnarray}{\label{eq:2.2}}
&&{}{d\over dt}\int Ric(\nb_xu_x,\nb_xu_x)dx \nn\\
&=&-2\epa\int Ric (\nb_x^3u_x,\nb^3_xu_x)dx +2\int Ric
(\nb_x^2u_x,\nb^3_xu_x)dx\nn\\
&&{}+2\ro\int Ric(u_x,u_x)Ric (u_x,\nb^3_xu_x)dx\nn\\
&&{}-2\epa\int Ric (\nb_x^3u_x,R(\nb_xu_x,u_x)u_x)dx
+2\int Ric (\nb_x^2u_x,R(\nb_xu_x,u_x)u_x)dx\nn\\
&&{}+2\ro\int Ric(u_x,u_x)Ric (u_x,R(\nb_xu_x,u_x)u_x)dx.
\end{eqnarray}
It is easy to see that the second term and the last term on the
right hand side vanish since
$$2Ric(\nb_x^2u_x,\nb^3_xu_x)=\nb_x\left(Ric(\nb_x^2u_x,\nb_x^2u_x)\right)$$
and
$$Ric (u_x,R(\nb_xu_x,u_x)u_x)\equiv0.$$
Moreover, for the fifth term on the right, we have
\begin{eqnarray}
&&{}2\int Ric (\nb_x^2u_x,R(\nb_xu_x,u_x)u_x)dx\nn\\
&=&-2\int Ric (\nb_xu_x,(\nb_xR)(\nb_xu_x,u_x)u_x)dx-2\int Ric
(\nb_xu_x,R(\nb^2_xu_x,u_x)u_x)dx,\nn
\end{eqnarray}
which implies that $$2\int Ric
(\nb_x^2u_x,R(\nb_xu_x,u_x)u_x)dx=-\int Ric
(\nb_xu_x,(\nb_xR)(\nb_xu_x,u_x)u_x)dx.$$

For the left terms of (\ref{eq:2.2}), after integration by parts we
get
\begin{eqnarray}
&&{}2\ro\int Ric(u_x,u_x)Ric (u_x,\nb^3_xu_x)dx\nn\\
&=&-2\ro\int Ric(u_x,u_x)Ric (\nb_xu_x,\nb^2_xu_x)dx-4\ro\int Ric(\nb_xu_x,u_x)Ric (u_x,\nb^2_xu_x)dx\nn\\
&=&6\ro\int Ric(\nb_xu_x,\nb_xu_x)Ric (\nb_xu_x,u_x)dx;\nn
\end{eqnarray}
and
\begin{eqnarray}
 &&{}-2\epa\int Ric (\nb_x^3u_x,R(\nb_xu_x,u_x)u_x)dx\nn\\
&=&2\epa\int Ric (\nb_x^2u_x,(\nb_xR)(\nb_xu_x,u_x)u_x)dx+2\epa\int
Ric (\nb_x^2u_x,R(\nb^2_xu_x,u_x)u_x)dx\nn\\&&{}+2\epa\int Ric
(\nb_x^2u_x,R(\nb_xu_x,u_x)\nb_xu_x)dx.\nn
\end{eqnarray}
Hence we obtain that
\begin{eqnarray}{\label{eq:2.3}}
&&{}{d\over dt}\left(-{1\over\lambda}\int Ric(\nb_xu_x,\nb_xu_x)dx\right)-{2\epa\over\lambda}\int Ric (\nb_x^3u_x,\nb^3_xu_x)dx\nn\\
&=&-{1\over\lambda}\left(2\epa\int Ric
(\nb_x^2u_x,(\nb_xR)(\nb_xu_x,u_x)u_x)dx+2\epa\int
Ric (\nb_x^2u_x,R(\nb^2_xu_x,u_x)u_x)dx\right.\nn\\
&&{}+2\epa\int Ric(\nb_x^2u_x,R(\nb_xu_x,u_x)\nb_xu_x)dx-\int Ric
(\nb_xu_x,(\nb_xR)(\nb_xu_x,u_x)u_x)dx\nn\\
&&{}\left.+6\ro\int Ric(\nb_xu_x,\nb_xu_x)Ric (\nb_xu_x,u_x)dx\right)\nn\\
&\leq&
C(\Omega,\ro,\lambda)\left(\int|\nb_x^2u_x||\nb_xu_x||u_x|^3dx+\int|\nb_x^2u_x|^2|u_x|^2dx
+\int|\nb_x^2u_x||\nb_xu_x|^2|u_x|dx\right.\nn\\
&&{}+\left.\int|\nb_xu_x|^2|u_x|^3dx+\int|\nb_xu_x|^3|u_x|dx
\right).
\end{eqnarray}
Utilizing H\"older inequality and the following interpolation
inequalities
\begin{eqnarray}\label{ineq}
||u_x||_{L^\infty}&\leq& C(\Omega)
\left(||\nabla_x u_x||^2_{L^2}+||u_x||^2_{L^2}\right)^{1\over4}||u_x||^{1\over2}_{L^2};\nn\\
||\nabla_x u_x||_{L^\infty}&\leq& C(\Omega)\left(||\nabla_x^2
u_x||^2_{L^2}+||\nabla_xu_x||^2_{L^2}\right)^{1\over4}||\nabla_xu_x||^{1\over2}_{L^2},
\end{eqnarray}
we obtain that
\begin{eqnarray}{\label{eq:2.4}}
&&{}{d\over dt}\left(-{1\over\lambda}\int
Ric(\nb_xu_x,\nb_xu_x)dx\right)-{2\epa\over\lambda}\int Ric
(\nb_x^3u_x,\nb^3_xu_x)dx\leq
C(\Omega,\ro,\lambda)||u_x||_{H^2}^4.\nn\\
\end{eqnarray}
Now to show (\ref{eq:2.0}), we need compute ${d\over dt}\int
Ric(\nb_x^2u_x,\nb_x^2u_x)dx$ and we have
\begin{eqnarray}\label{eq:2.5}
&&{}{d\over dt}\int Ric(\nabla_x^2u_x,\nabla_x^2u_x)dx=2\int Ric(\nabla_t\nabla_x^2u_x,\nabla_x^2u_x) dx\nn\\
&=&2\int Ric(\nabla_x\nabla_t\nabla_x u_x,\nabla_x^2u_x) dx+2\int Ric( R(u_t,u_x)\nabla_x u_x,\nabla_x^2u_x) dx\nn\\
&=&-2\int Ric(\nabla_x^2u_t,\nabla_x^3u_x) dx-2\int Ric(\nabla_x^3u_x,R(u_t,u_x)u_x) dx\nn\\
&&{}+2\int Ric(\nabla_x^2u_x,R(u_t,u_x)\nabla_x u_x) dx\nn\\
&=&-2\int Ric( u_t,\nabla_x^5u_x) dx-2\int Ric( u_t,R(\nabla_x^3u_x,u_x)u_x) dx\nn\\
&&{}+2\int Ric( u_t,R(\nabla_x^2u_x,\nabla_x u_x)u_x) dx.
\end{eqnarray}
Substituting (\ref{eq:2.1}) into (\ref{eq:2.5}) and noting that
$$\int Ric(\nb_x^3u_x,\nb_x^5u_x)dx=-\int Ric(\nb_x^4u_x,\nb_x^4u_x); \
\int Ric(\nb_x^2u_x,\nb_x^5u_x)dx=0$$
$$\int Ric( u_x,R(\nabla_x^3u_x,u_x)u_x) dx=\int Ric( u_x,R(\nabla_x^2u_x,\nabla_x u_x)u_x)\equiv0,$$
we have
\begin{eqnarray}\label{eq:2.6}
&&{}{d\over dt}\left(-{1\over\lambda}\int Ric(\nabla_x^2u_x,\nabla_x^2u_x)dx\right)-{2\epa\over
\lambda}\int Ric(\nb_x^4u_x,\nb_x^4u_x)dx\nn\\
&=&-{1\over\lambda}\left(2\epa\int Ric(
\nb_x^3u_x,R(\nabla_x^3u_x,u_x)u_x) dx-2\epa\int
Ric( \nb_x^3u_x,R(\nabla_x^2u_x,\nabla_x u_x)u_x) dx\right.\nn\\
&&{}-2\ro\int Ric( Ric(u_x,u_x)u_x,\nabla_x^5u_x) dx-2\int Ric(
\nb_x^2u_x,R(\nabla_x^3u_x,u_x)u_x) dx\nn\\
&&{}\left.+2\int Ric( \nb_x^2u_x,R(\nabla_x^2u_x,\nabla_x u_x)u_x)
dx.\right)
\end{eqnarray}
For the first two terms of (\ref{eq:2.6}) on the right , integrating
by parts yields
\begin{eqnarray}
&&{}\quad 2\epa\int  Ric(\nabla_x^3u_x,R(\nabla_x^3u_x,u_x)u_x)
dx-2\epa\int
Ric( \nb_x^3u_x,R(\nabla_x^2u_x,\nabla_x u_x)u_x) dx\nn\\
&=&-2\epa\int  Ric(\nabla_x^3u_x,(\nabla_x R)(\nabla_x^2u_x,u_x)u_x) dx-2\epa\int  Ric(\nabla_x^4u_x,R(\nabla_x^2u_x,u_x)u_x) dx\nn\\
&&{}-2\epa\int  Ric(\nabla_x^3u_x,R(\nabla_x^2u_x,u_x)\nabla_x u_x)
dx\nn\\
&&{}-4\epa\int  Ric(\nabla_x^3u_x,R(\nabla_x^2u_x,\nabla_x u_x)u_x)
dx.
\end{eqnarray}
Hence for any $\delta>0$,
\begin{eqnarray}
&&{}{-{1\over\ld}}\left( 2\epa\int
Ric(\nabla_x^3u_x,R(\nabla_x^3u_x,u_x)u_x)dx-2\epa\int
Ric( \nb_x^3u_x,R(\nabla_x^2u_x,\nabla_x u_x)u_x) dx\right)\nn\\
&\leq&{\epa\delta\over\ld} \int|\nabla_x^4u_x|^2dx+{4\epa\delta\over\ld} \int|\nabla_x^3u_x|^2dx\nn\\
&&{}+{\epa C(\Omega)\over 2\delta\ld}\left\{\int|(\nabla_x
R)(\nabla_x^2u_x,u_x)u_x|^2dx
+\int|R(\nabla_x^2u_x,u_x)u_x|^2dx\right.\nn\\
&&{}+\left.\int|R(\nabla_x^2u_x,u_x)\nabla_x u_x|^2dx
+2\int|R(\nabla_x^2u_x,\nabla_x u_x)u_x|^2dx\right\}\nn\\
&\leq&{\epa\delta\over\ld}
\int|\nabla_x^4u_x|^2dx+{4\epa\delta\over\ld}
\int|\nabla_x^3u_x|^2dx\nn\\
&&{}+{C(\Omega)\over2\delta\ld}\int|\nabla_x^2u_x|^2(|u_x|^6+|u_x|^4+|\nabla_x u_x|^2|u_x|^2)dx\nn\\
&\leq&{\epa\delta\over\ld}
\int|\nabla_x^4u_x|^2dx+{4\epa\delta\over\ld}
\int|\nabla_x^3u_x|^2dx+{C(\Omega)\over2\delta\ld}(||u_x||^4_{H^2}+||u_x||^6_{H^2}+||u_x||^8_{H^2})\nn\\
&\leq&-{\epa\delta\over \lambda^2} \int
Ric(\nabla_x^4u_x,\nabla_x^4u_x)dx-{4\epa\delta\over \lambda^2} \int
Ric(\nabla_x^3u_x,\nabla_x^3u_x)dx\nn\\
&&{}+{C(\Omega)\over2\delta\ld}\left(||u_x||^4_{H^2}+||u_x||^6_{H^2}+||u_x||^8_{H^2}\right).
\end{eqnarray}
For the third term of (\ref{eq:2.6}), integrating by parts yields
\begin{eqnarray}\label{eq:2.22}
&&{}-\int Ric( Ric(u_x,u_x)u_x,\nabla_x^5u_x) dx\nn\\
&=&\int Ric( \nabla_x^4u_x,\nb_xu_x) Ric(u_x,u_x)dx+2\int Ric(
\nabla_x^4u_x,u_x) Ric(\nb_xu_x,u_x)dx\nn\\
&=&-\int Ric( \nabla_x^3u_x,\nb^2_xu_x) Ric(u_x,u_x)dx-4\int Ric(
\nabla_x^3u_x,\nb_xu_x) Ric(\nb_xu_x,u_x)dx\nn\\
&&{}-2\int Ric( \nabla_x^3u_x,u_x) Ric(\nb_x^2u_x,u_x)dx-2\int Ric(
\nabla_x^3u_x,u_x) Ric(\nb_xu_x,\nb_xu_x)dx\nn\\
&=&5\int Ric( \nabla_x^2u_x,\nb^2_xu_x) Ric(\nb_xu_x,u_x)dx+10\int
Ric( \nabla_x^2u_x,\nb_xu_x) Ric(\nb^2_xu_x,u_x)dx\nn\\
&\leq&C(\Omega)\int |\nb_x^2u_x|^2|\nb_xu_x||u_x|dx\leq
C(\Omega)||\nb_xu_x||_{L^\infty}||u_x||_{L^\infty}\int
|\nb_x^2u_x|^2dx.
\end{eqnarray}
Thus by (\ref{ineq}) we have
\begin{eqnarray} \label{eq:2.7}
&&{}{2\ro\over\ld}\int Ric( Ric(u_x,u_x)u_x,\nabla_x^5u_x) dx\leq
C(\Omega,\ro,\ld)||u_x||^4_{H^2}.
\end{eqnarray}
It is easy to check that the other two terms of (\ref{eq:2.6}) are
also bounded by by $C(\Omega) ||u_x||^4_{H^2}$ via the similar
argument, we omit the detail. This together with
Ineq.(\ref{eq:2.2*}), (\ref{eq:2.4}) and
(\ref{eq:2.6})-(\ref{eq:2.7}) yields
\begin{eqnarray}\label{eq:2.8}
0&<&{d\over dt}\int
(-{1\over\ld})\Big\{Ric(\nabla_x^2u_x,\nabla_x^2u_x)+Ric(\nb_xu_x,\nb_xu_x)+Ric(u_x,u_x)\Big\}dx\nn\\
&&{}-\left({2\epa\over\ld}-{\epa\delta\over \lambda^2} \right)\int
Ric(\nb_x^4u_x,\nb_x^4u_x)dx
-\left({2\epa\over\ld}-{4\epa\delta\over
\lambda^2}\right)\int Ric(\nabla_x^3u_x,\nabla_x^3u_x)dx\nn\\
&\leq&{C(\Omega,\ro,\ld)}\left({1\over2\delta}+1\right)\Big(||u_x||^4_{H^2}
+||u_x||^6_{H^2}+||u_x||^8_{H^2}\Big).
\end{eqnarray}
Thus, let $\delta={\lambda\over8}$, we have
\begin{eqnarray}\label{eq:2.8}
0&<&{d\over dt}\left(-{1\over\ld}\sum_{s=0}^2\int
Ric(\nabla_x^su_x,\nabla_x^su_x)dx\right)\leq{C(\Omega,\ro,\lambda)}\sum_{l=2}^4||u_x||^{2l}_{H^2}.
\end{eqnarray}
Furthermore, by the assumption that the Ricci curvature on $N$ has a
negative upper bound $-\lambda$, we have
$$||u_x||_{H^2}^2=\sum_{s=0}^2\int |\nb_x^su_x|^2dx\leq -{1\over \lambda}\sum_{s=0}^2\int Ric(\nb_x^su_x,\nb_x^su_x)dx.$$
Hence from (\ref{eq:2.8}) we could obtain that
\begin{eqnarray}\label{eq:2.9}
{d\over dt}\left(-{1\over\ld}\sum_{s=0}^2\int
Ric(\nabla_x^su_x,\nabla_x^su_x)dx\right)
\leq{C(\Omega,\ro,\lambda)}\sum_{l=2}^4\left(-{1\over\ld}\sum_{s=0}^2\int
Ric(\nb_x^su_x,\nb_x^su_x)dx\right)^l.
\end{eqnarray}

If $k\geq4$, then for $3\leq m\leq k-1$, by the similar argument, we
could get
\begin{eqnarray}\label{eq:2.10}
{d\over dt}\left(-{1\over\ld}\sum_{s=0}^m\int
Ric(\nabla_x^su_x,\nabla_x^su_x)dx\right) \leq
C(\Omega,\ro,\lambda,Q_{m-1})\left(-{1\over\ld}\right)\sum_{s=0}^m\int
Ric(\nabla_x^su_x,\nabla_x^su_x)dx,\nn\\
\end{eqnarray}
where $$Q_{m-1}(u)=-{1\over\ld}\sum_{i=0}^{m-1}\int
Ric(\nb_x^iu_x,\nb_x^iu_x)dx\geq0,$$ $C(\Omega,\ro, \lambda,
Q_{m-1})$ only depends on $Q_{{m-1}}$, $\ro$, $\lambda$, the bounds
on the Ricci curvature and the bounds on the curvature $R$ and its
covariant derivatives $\nabla^lR$ with $l\leq m$ on $\Omega\subset
N$. We omit the details of the proof. We should note that by the
definition of $Q_{{m-1}}(u)$ we have $$Q_{{m-1}}(u)\leq C(\Omega,\ld
)\sum_{i=0}^{m-1}\int
|\nb_x^iu_x|^2dx=C(\Omega)||u_x||^2_{H^{m-1}}.$$

Thus, if we let $$f(t)=-{1\over\ld}\sum_{s=0}^2\int
Ric(\nabla_x^su_x,\nabla_x^su_x)dx+1,$$ then we have
\begin{eqnarray}\label{eq:2.11}
{df\over dt}\leq C(\Omega)f^4,\quad
f(0)=-{1\over\ld}\sum_{s=0}^2\int
Ric(\nabla_x^su_{0x},\nabla_x^su_{0x})dx+1.
\end{eqnarray}
It follows from (\ref{eq:2.11}) that there exists constants $T_0>0$
and $C_0>0$ such that
\begin{eqnarray*}
||u_{x}||^2_{H^2}\leq -{1\over\lambda}\sum_{s=0}^2\int
Ric(\nabla_x^su_x,\nabla_x^su_x)dx\leq C_0,\quad
t\in[0,\text{min}(T_0,T')].
\end{eqnarray*}
Now let $T=\text{min}(T_0,T')$. If $m=3$, by the Gronwall
inequality, we can obtain from (\ref{eq:2.10}):
\begin{eqnarray*}
||u_{ x}||_{H^3}^2\leq-{1\over\lambda}\sum_{s=0}^3\int
Ric(\nabla_x^su_x,\nabla_x^su_x)dx\leq  C_1\left(\Omega,\lambda,
T, Q_3(u_0)\right),\quad\text{for all}\quad t\in[0,T].
\end{eqnarray*}
Then by induction we have that there exists a constant
$C_{m-2}(\Omega, \lambda, Q_m(u_0))>0$, such that for any $3\leq
m\leq k-1$
\begin{eqnarray}\label{eq:2.12}
\text{ess sup}_{t\in[0,T]}||u_x||^2_{H^m}\leq C_{m-2}\left(\Omega,
\lambda, Q_m(u_0)\right)\leq C_{m-2}\left(\Omega, \lambda,
||u_{0x}||_{H^{m}}\right).
\end{eqnarray}

If $N$ is of uniform bounds on the curvature tensor and its
derivatives $\nabla^lR$ with $l\leq m$, it is easy to see from the
above arguments that $T=T_0$ since the coefficients of the above
differential inequalities depend only on the bounds on Ricci
curvature, the Riemann curvature tensor $R$ and its covariant
derivatives $\nabla^l R$ of order $l\leq m$ on $N$. That is
$T=T(N,Q_3(u_0),\lambda)$ depends only on $N$, $u_0$ and $\lambda$,
not on $0<\epa<1$.

Now we consider the case $N$ is a noncompact, complete Riemannian
manifold with parallel Ricci tensor and the Ricci curvature has a
negative upper bound $-\lambda<0$. Note that a positive lower bound
of $T'$ can also be derived from (\ref{eq:2.12}) when $k\geq 5$.
Indeed, It is easy to see from the approximate equation
(\ref{eq:2.1}) and the interpolation inequalities (see Theorem 2.1
in \cite{YD} for details) that (\ref{eq:2.12}) implies
$$\text{ess sup}_{t\in[0,T]}||u_t||_{L^2(S^1, TN)}\leq C(\Omega,\lambda, Q_3(u_0))\leq C(\Omega,\lambda,||u_{0x}||_{H^3}).$$
On the other hand, from the approximate equation of the geometric
flow (\ref{eq:2.1}) we have
$$\nabla_xu_t=-\epa\nabla_x^4u_x +\nabla_x^3u_x + \ro \nabla_x(Ric(u_x, u_x)u_x).$$
Hence, when $k\geq5$ we infer from (\ref{eq:2.12}) and the
interpolation inequality that
$$\text{ess sup}_{t\in[0,T]}||u_t||_{H^1(S^1, TN)}\leq C(\Omega, \|u_{0x}\|_{H^4}).$$

Moreover, for some $0<a<1$ the following interpolation inequality
holds
\begin{eqnarray*}
||u_t(s)||_{L^\infty}\leq C||u_t(s)||^a_{H^1}||u_t(s)||^{1-a}_{L^2}.
\end{eqnarray*}
This implies that, for some $\mathcal{M}>0$, there holds true
$$\text{ess sup}_{t\in[0,T]}||u_t||_{L^\infty}\leq \mathcal{M}.$$
Thus we have
\begin{eqnarray*}
\sup_{x\in S^1} d_N(u(x,t), u_0(x)) \leq \mathcal{M}t, \quad \text
{for}\quad t<T.
\end{eqnarray*}
If $T'>T_0$ we get the lower bound, so we may assume that $T'\leq
T_0$. Then letting $t\rightarrow T'$ in the above inequality we get
$\mathcal{M} T'\geq1$. Therefore, if we set
$T=\text{min}\{{1\over\mathcal{M}}, T_0\},$ then the desired
estimates hold for $t\in[0,T].$

It is easy to find that the solution to (\ref{eq:2.1}) with
$\epa\in(0,1)$ must exist on the time interval $[0,T]$. Otherwise,
we always extend the time interval of existence to cover $[0,T]$.
Hence we always have $T_\epa\geq T$ and then we complete the proof
of this lemma.
\end{proof}

Here we should point out that if the ricci curvature of a complete
Riemannian manifold $N$ has a positive lower bound, i.e.
$Ric\geq\ld>0$, then by Myers's theorem $N$ must be compact. Hence
in this case we have the following corollary via easier arguments
than that in Lemma \ref{lm:2.1}.
\begin{coro}\label{lm:2.1*}
Assume that $N$ is a complete Riemannian manifold with parallel
Ricci tensor, (i.e. $\nb Ric\equiv0$). If $N$ has positive lower
bounds on the Ricci curvature ($Ric\geq \lambda>0$) and $u_0\in
H^k(S^1,N)$ with an integer $k\geq3$. Then there exists a constant
$T=T(||u_0||_{H^3})$, independent of $\epa\in(0,1)$, such that if
$u\in C([0,T_\epa),H^k(S^1,N))$ is a solution of (\ref{eq:2.1}) with
$\epa\in(0,1)$, then $T(||u_0||_{H^3})\leq T_\epa$ and
$||u(t)||_{H^{m+1}}\leq C(||u_0||_{H^{m+1}})$ for any integer $2\leq
m\leq k-1$.
\end{coro}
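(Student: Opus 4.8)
The plan is to reduce the statement to the estimates already established for Lemma \ref{lm:2.1}, with compactness doing the main simplifying work. Since $N$ is complete and $Ric\geq\lambda>0$, Myers's theorem guarantees that $N$ is compact. This at once removes the two features that made Lemma \ref{lm:2.1} delicate: first, the curvature tensor $R$ and all of its covariant derivatives $\nabla^lR$ are automatically bounded by constants $B_l=B_l(N)$ depending only on $N$, so the curvature-bound hypothesis of part (i) of Lemma \ref{lm:2.1} comes for free; second, one may take the reference region $\Omega$ to be all of $N$, so there is no escape-time $T'$ to track and no need for the extra regularity $k\geq5$ used in part (ii). Consequently every constant of the form $C(\Omega,\dots)$ in the proof of Lemma \ref{lm:2.1} becomes a constant depending only on $N$, $\rho$, $\lambda$ and the relevant Sobolev norm of $u_0$.

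Next I would introduce the Ricci-weighted energy adapted to the positive sign, $I_m(u_x)=\sum_{s=0}^m\int_{S^1}Ric(\nabla_x^su_x,\nabla_x^su_x)\,dx$, which now satisfies $I_m(u_x)\geq\lambda\|u_x\|_{H^m}^2$, so that a bound on $I_m$ controls $\|u_x\|_{H^m}$. The computations of $\frac{d}{dt}\int Ric(u_x,u_x)\,dx$, $\frac{d}{dt}\int Ric(\nabla_xu_x,\nabla_xu_x)\,dx$ and $\frac{d}{dt}\int Ric(\nabla_x^2u_x,\nabla_x^2u_x)\,dx$ are carried out verbatim as in (\ref{eq:2.2*})--(\ref{eq:2.7}), using the properties (\ref{eq:3.0}) (so that $Ric$ passes through $\nabla_x$ under integration by parts) together with the vanishing identity $Ric(u_x,R(\nabla_xu_x,u_x)u_x)\equiv0$. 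The only change is the sign: one works with $+\frac{1}{\lambda}I_2$ in place of $-\frac{1}{\lambda}I_2$, and the main viscosity contributions become $-\frac{2\epsilon}{\lambda}\int Ric(\nabla_x^{s+2}u_x,\nabla_x^{s+2}u_x)\,dx\leq-2\epsilon\int|\nabla_x^{s+2}u_x|^2\,dx\leq0$. These dissipative terms both absorb the $\epsilon$-dependent curvature errors (via a small parameter $\delta$, exactly as in the passage leading to (\ref{eq:2.8})) and are otherwise favorable and may be discarded. Combined with the interpolation inequalities (\ref{ineq}), this yields, with $f(t)=\frac{1}{\lambda}I_2(u_x)+1$, a differential inequality $\frac{df}{dt}\leq C(N,\rho,\lambda)f^4$ whose constant is independent of $\epsilon\in(0,1)$.

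Integrating this inequality produces $T_0=T_0(\|u_0\|_{H^3})$ and a bound $\|u_x(t)\|_{H^2}^2\leq\frac{1}{\lambda}I_2(u_x(t))\leq C_0$ on $[0,T_0]$. For $3\leq m\leq k-1$ I would then establish, by the same integration-by-parts scheme, the analogue of (\ref{eq:2.10}): a differential inequality linear in the top-order quantity $I_m$, whose coefficient depends only on $N$, $\rho$, $\lambda$ and the lower-order quantity $Q_{m-1}$ already controlled at the previous stage. Gronwall's inequality together with induction on $m$ then gives $\sup_{t\in[0,T_0]}\|u_x(t)\|_{H^m}^2\leq C_{m-2}(N,\lambda,\|u_{0x}\|_{H^m})$. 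Setting $T=T_0$, which depends only on $\|u_0\|_{H^3}$ and $N$ and not on $\epsilon$, the standard continuation argument shows that the solution of (\ref{eq:2.1}) persists on $[0,T]$, whence $T_\epsilon\geq T$ and the claimed uniform bounds hold.

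Since the statement is advertised as following by easier arguments, there is no genuine obstacle here; the single point requiring care is the sign bookkeeping in the viscosity terms, namely verifying that under $Ric\geq\lambda>0$ the $\epsilon$-dependent contributions are dissipative (rather than needing the reversed manipulation used in the case $Ric\leq-\lambda<0$) and retain enough room to absorb the $\epsilon$-curvature errors after choosing $\delta$ small. All the substantive analytic content—the choice of the Ricci-weighted norm $I_m$ and the cancellations forced by $\nabla Ric\equiv0$—is inherited directly from Lemma \ref{lm:2.1}.
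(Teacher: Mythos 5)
Your proposal is correct and matches the paper's intended argument: the paper itself omits the proof of this corollary, remarking only that $N$ is compact (by Myers's theorem) and that it suffices to bound $\frac{d}{dt}\sum_{s=0}^2\int Ric(\nabla_x^su_x,\nabla_x^su_x)\,dx$ by powers of that same positively-weighted quantity, which is exactly the route you take. Your sign bookkeeping for the viscosity terms and the absorption of the $\epsilon$-curvature errors via small $\delta$ correctly fills in the details the paper leaves implicit, with no deviation from its approach.
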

We omit the details of the proof since the process is similar and in
this case $N$ is compact. Here we point out that instead of showing
(\ref{eq:2.0}), it is suffices to show
\begin{eqnarray}
{d\over dt}\left(\sum_{s=0}^2\int
Ric(\nabla_x^su_x,\nabla_x^su_x)dx\right)\leq{C(\Omega,\lambda)}\sum_{l=2}^4\left(\sum_{s=0}^2\int
Ric(\nb_x^su_x,\nb_x^su_x)dx\right)^l.\nn
\end{eqnarray}

Following Lemma\ref{lm:2.1}, we could obtain the following local
existence results of the Cauchy problem (\ref{eq:0.1}).
\begin{lem}\label{lem:2.2}
Let $(N, h)$ is a complete Riemannian manifold with parallel Ricci
tensor. If the Ricci curvature has a negative upper bound
$-\lambda<0$ and the curvature tensor $R$ and its covariant
derivatives of any order have uniform bounds $($i.e.,
$|\nabla^lR|\leq B_l$, $l=0, 1, 2, \cdots$$)$, then, for any integer
$k\geq3$ the Cauchy problem of (\ref{eq:0.1}) with the initial value
map $u_0\in H^k(S^1, N)$ admits a local solution $u\in
L^\infty([0,T],H^k(S^1, N))$, where $T=T(N,||u_0||_{H^3})$.

\end{lem}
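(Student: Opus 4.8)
The plan is to obtain the solution of (\ref{eq:0.1}) as a vanishing-viscosity limit of the approximate solutions $u_\epa$ of (\ref{eq:2.1}) as $\epa\to0$, following the scheme already announced after (\ref{eq:0.5}). Since the hypotheses of Lemma \ref{lm:2.1}(i) coincide with those assumed here (parallel Ricci tensor, $Ric\le-\ld<0$, and $|\nb^lR|\le B_l$ for all $l$), that lemma applies for every $k\ge3$: there is a time $T=T(N,\|u_0\|_{H^3})>0$, independent of $\epa\in(0,1)$, such that each $u_\epa$ from the appendix of \cite{SW} exists on the whole interval $[0,T]$ (i.e. $T\le T_\epa$) and satisfies the $\epa$-uniform bound
\[
\sup_{t\in[0,T]}\|u_\epa(t)\|_{H^{m+1}}\le C(\|u_0\|_{H^{m+1}}),\qquad 2\le m\le k-1.
\]
Taking $m=k-1$ gives a uniform bound on $\|u_\epa\|_{H^k}$, so $\{u_\epa\}$ is bounded in $L^\infty([0,T],H^k(S^1,N))$ independently of $\epa$.

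To upgrade this weak bound to genuine convergence I would control the time derivative. Embedding $N\hookrightarrow\mathbb{R}^n$ and reading (\ref{eq:2.1}) as an equation for the $\mathbb{R}^n$-valued map, the uniform $H^k$ bound controls $\nb_x^2u_{\epa x}$ in $H^{k-3}$ and the cubic term $Ric(u_{\epa x},u_{\epa x})u_{\epa x}$ in $H^{k-1}$, while the regularizing term satisfies $\|\epa\nb_x^3u_{\epa x}\|_{H^{k-4}}\le\epa\,C\le C$ because $\epa<1$. Hence $\partial_t u_\epa$ is bounded in $L^\infty([0,T],H^{k-4})$ uniformly in $\epa$. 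Since $S^1$ is compact the embedding $H^k\hookrightarrow H^{k-1}$ is compact and $H^{k-1}\hookrightarrow H^{k-4}$ is continuous, so by the Aubin--Lions--Simon compactness lemma I can extract a sequence $\epa_j\to0$ with $u_{\epa_j}\to u$ strongly in $C([0,T],H^{k-1}(S^1,\mathbb{R}^n))$ and weakly-$*$ in $L^\infty([0,T],H^k)$.

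It then remains to identify $u$ as a solution of (\ref{eq:0.1}). Weak-$*$ lower semicontinuity of the norm yields $u\in L^\infty([0,T],H^k)$. Because $k-1\ge2$, the Sobolev embedding $H^{k-1}(S^1)\hookrightarrow C^1(S^1)$ makes the convergence uniform together with the first derivatives; as each $u_{\epa_j}$ takes values in the closed submanifold $N$, so does $u$, and the Christoffel symbols as well as the tensors $Ric$, $R$ converge along $u_{\epa_j}$. Passing to the limit in (\ref{eq:2.1}) I then check term by term: the viscous term $\epa_j\nb_x^3u_{\epa_j x}\to0$ since it is $\epa_j$ times a uniformly bounded quantity, $\nb_x^2u_{\epa_j x}\to\nb_x^2u_x$ in $H^{k-4}$, and the cubic term $\ro\,Ric(u_{\epa_j x},u_{\epa_j x})u_{\epa_j x}$ converges by the strong $C^1$-convergence of the maps together with the algebra property of $H^{k-1}$. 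Thus $u$ solves (\ref{eq:0.1}) in the distributional sense and lies in $L^\infty([0,T],H^k(S^1,N))$, which is the assertion.

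The step I expect to be most delicate is the last one: turning the strong $C([0,T],H^{k-1})$ convergence into convergence of the genuinely nonlinear, \emph{geometric} terms. The subtlety is that $\nb_x$ and the curvature tensors depend on $u_{\epa_j}$ through the Christoffel symbols, so the passage to the limit couples the convergence of the map with that of its derivatives; one must verify that the $C^1$-convergence coming from $H^{k-1}\hookrightarrow C^1$ (valid precisely because $k\ge3$) suffices to pass to the limit in the products and, crucially, to ensure that the limiting equation is again intrinsic on $N$ rather than merely the extrinsic $\mathbb{R}^n$-system. The constraint $\epa<1$ is what simultaneously makes the viscous term uniformly bounded in $H^{k-4}$ and vanishing in the limit, so no estimate beyond Lemma \ref{lm:2.1}(i) is required.
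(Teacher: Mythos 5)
Your proposal is correct, and it rests on the same two pillars as the paper's argument --- the $\epa$-uniform estimates of Lemma \ref{lm:2.1} and an Aubin--Lions type compactness step with the limit identified through the extrinsic formulation --- but you organize the limits differently. The paper performs the vanishing-viscosity limit $\epa\to 0$ only for \emph{smooth} initial data, where Lemma \ref{lm:2.1} bounds every Sobolev norm and the convergence $u_\epa\to u$ upgrades to $C^p(S^1\times[0,T])$ for all $p$, so identifying the limit equation is immediate; it then handles $u_0\in H^k$ by choosing smooth maps $u_{i0}\to u_0$ in $W^{k,2}$ and passing to the limit in the resulting solutions $u_i$ of the limit equation (\ref{eq:0.1}), using $du_i/dt\in L^2([0,T],W^{k-3,2})$, the H\"older bound $u_i\in C^{0,1/2}([0,T],W^{k-3,2})$, interpolation, Rellich and Ascoli--Arzel\`a (morally the same Simon compactness you invoke), and verifying the weak formulation term by term via the projection identities (\ref{eq:2.23})--(\ref{eq:2.24}). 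You instead run a single limit $\epa\to0$ directly at the $H^k$ level with the rough data; this is legitimate, since the appendix of \cite{SW} already provides $u_\epa\in C([0,T_\epa),W^{k,2})$ for $u_0\in H^k$, $k\geq 3$, and your bound on $\partial_t u_\epa$ in $L^\infty([0,T],H^{k-4})$ is exactly what Simon's lemma requires. Your route is shorter (one approximation instead of two), at the price of identifying the equation at the lowest regularity $k=3$, where $H^{k-4}=H^{-1}$ and products such as $P(u_{\epa_j})u_{\epa_j xxx}$ must be passed to the limit distributionally --- uniform $C^1$ convergence of the maps against $H^{k-4}$ convergence of the top derivative does suffice, as you note. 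The paper's two-stage route buys a trivial identification step and, as a byproduct, the smooth-solution approximation of (\ref{eq:0.1}) that is reused in the proofs of Theorems \ref{thm:1.2} and \ref{thm:1.3}. Two details you should make explicit: Lemma \ref{lm:2.1} covers non-smooth $H^k$ data by an approximation inside its own proof, so you may quote it directly; and for noncompact $N$ one needs, as in the paper, that $u_\epa(S^1,t)$ remains in a fixed compact set $\overline{\Omega}$ for $t\leq T$, so that $P$, $Ric$ and their derivatives are uniformly controlled along the sequence.
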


Before proving Lemma \ref{lem:2.2}, we remark that in \cite{YD},
Ding and Wang have shown that the $H^{m}$ norm of section $\nabla u$
is equivalent to the usual Sobolev $W^{m+1, 2}$ norm of the map $u$.
Precisely, we have
\begin{lem}\label{lm:w} (\cite{YD})
Assume that $N$ is a compact Riemannian manifold with or without
boundary and $m\geq 1$. Then there exists a constant $C = C(N,m)$
such that for all $u \in C^\infty(S^1,N)$,
\[ \Norm{Du}_{W^{m-1,2}} \leq C\sum_{i=1}^m \Norm{\nabla u}^i_{H^{m-1,2}} \]
and
\[ \Norm{\nabla u}_{H^{m-1,2}} \leq C\sum_{i=1}^m \Norm{Du}^i_{W^{m-1,2}}. \]\\
\end{lem}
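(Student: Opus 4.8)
The plan is to reduce the two inequalities to a single pair of \emph{algebraic} identities relating the ambient derivatives $D^ju$ to the covariant derivatives $\nb_x^{j-1}u_x$, and then to estimate the resulting nonlinear remainders by the one–dimensional Gagliardo--Nirenberg inequalities of the type (\ref{ineq}). I would use the isometric embedding $N\hookrightarrow\mathbb{R}^n$ fixed earlier in this section, so that $u$ is $\mathbb{R}^n$-valued, $Du=u_x$ is tangent to $N$, and the Gauss formula reads $D^2u=\nb_xu_x+A(u)(u_x,u_x)$, where $A$ denotes the second fundamental form of $N\subset\mathbb{R}^n$. Equivalently, in local coordinates $(\nb_xu_x)^\al=\pa_x^2u^\al+\Gamma^\al_{\beta\gamma}(u)\pa_xu^\beta\pa_xu^\gamma$, so $D$ and $\nb$ differ by a term quadratic in $Du$ whose coefficients are smooth functions on $N$.

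First I would prove, by simultaneous induction on $j$, the structural identities
\begin{eqnarray*}
D^{j+1}u=\nb_x^ju_x+P_j,\qquad \nb_x^ju_x=D^{j+1}u+Q_j,
\end{eqnarray*}
where each of $P_j$ and $Q_j$ is a finite sum of terms of the form $G(u)\,D^{a_1}u\cdots D^{a_r}u$ (respectively with $\nb_x$-derivatives), with $G$ smooth on $N$, at least two factors $r\geq2$, every order $a_l\geq1$, and total order $\sum_l a_l=j+1$. The base case $j=1$ is the Gauss formula above. For the inductive step one differentiates the identity for $j$ once more; applying $\pa_x$ to a coefficient $G(u)$ produces an extra factor $u_x$, while applying it to $\nb_x^ju_x$ produces $\nb_x^{j+1}u_x$ plus a term $A(u)(u_x,\nb_x^ju_x)$ which is re-expressed through the companion identity $\nb_x^ju_x=D^{j+1}u+Q_j$. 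In every case differentiating a product of $\geq2$ factors again yields products of $\geq2$ factors with total order raised by exactly one, so the structure is preserved and the two identities close on each other.

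It then remains to estimate in $L^2$. Because $N$ is compact, $G(u)$ and all its $u$-derivatives are uniformly bounded, so each remainder term is controlled by $\|D^{a_1}u\cdots D^{a_r}u\|_{L^2}$ with $a_l\geq1$ and $\sum_l a_l=j+1\leq m$; in particular each individual factor has order $a_l\leq m-1$ and is thus controlled by $\|Du\|_{W^{m-1,2}}$. A product of $r$ such derivative factors of total order $\leq m$ is bounded, via the Gagliardo--Nirenberg inequalities on $S^1$, by $C\|Du\|^r_{W^{m-1,2}}$ with $2\leq r\leq j+1\leq m$. Summing the linear leading term $\|\nb_x^ju_x\|_{L^2}$ (contributing the power $i=1$) together with these remainders over $0\leq j\leq m-1$ yields
\begin{eqnarray*}
\|Du\|_{W^{m-1,2}}\leq C\sum_{i=1}^m\|\nb u\|^i_{H^{m-1,2}},
\end{eqnarray*}
and the reverse inequality follows identically from the identities for $Q_j$. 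I expect the main obstacle to be the multilinear bookkeeping: one must verify that every term produced by the induction has at least two factors and total order at most $m$, so that each is estimated by a power between $2$ and $m$ of the relevant Sobolev norm, which is exactly what produces the polynomial right-hand sides $\sum_{i=1}^m(\,\cdot\,)^i$.
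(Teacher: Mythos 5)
The paper itself offers no proof of this lemma: it is imported verbatim from \cite{YD} (Ding--Wang), where it is proved by precisely the argument you propose --- the Gauss formula $D^2u=\nabla_xu_x+A(u)(u_x,u_x)$ for the isometric embedding, an induction producing multilinear remainders with smooth coefficients, at least two factors and conserved total order, and the one-dimensional Gagliardo--Nirenberg inequalities (the same interpolation inequalities this paper invokes at (\ref{ineq}) and (\ref{eq:3.33})). So your route coincides with that of the cited source, and the structural bookkeeping (with $r\geq 2$ and $\sum_l a_l=j+1\leq m$ forcing each factor to have order $a_l\leq m-1$ and each monomial to contribute a power $2\leq r\leq m$) is exactly what yields the polynomial right-hand sides of degree $m$.

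One slip should be repaired before the argument is complete: your remainders are oriented the wrong way for the direction in which you use them. You define $P_j$ (in the identity $D^{j+1}u=\nabla_x^ju_x+P_j$) as a sum of monomials in the \emph{ambient} derivatives $D^{a_l}u$, and in the final paragraph you bound these by powers of $\|Du\|_{W^{m-1,2}}$ --- yet you then assert the forward inequality, whose right-hand side must be a polynomial in $\|\nabla u\|_{H^{m-1,2}}$. As written the display is circular: an estimate of the form $\|Du\|_{W^{m-1,2}}\leq \|\nabla u\|_{H^{m-1,2}}+C\sum_{r\geq2}\|Du\|^r_{W^{m-1,2}}$ cannot simply be absorbed, since the norms are not small. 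The fix is already contained in your setup: before estimating, substitute the companion identity $D^{a}u=\nabla_x^{a-1}u_x+P_{a-1}$ recursively into every monomial of $P_j$, which re-expresses $P_j$ as a sum of terms $G(u)\,\nabla_x^{b_1-1}u_x\cdots\nabla_x^{b_r-1}u_x$ with the same structure ($r\geq2$, $\sum_l b_l=j+1\leq m$, hence each $b_l-1\leq m-2$); each such term is then bounded by $C\|\nabla u\|^r_{H^{m-1,2}}$ with $2\leq r\leq m$, and the forward inequality follows. Symmetrically, for the reverse inequality $Q_j$ must be expanded in $D$-derivatives, the opposite of your ``respectively'' convention. (A naive alternative --- bounding the $\nabla$-form remainders by the inductive hypothesis and bootstrapping --- also closes, but produces polynomial degrees up to roughly $m^2$ rather than the stated degree $m$; the substitution avoids this.) With the orientations corrected, your proof is complete and matches the source's.
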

Now we turn to the proof of Lemma \ref{lem:2.2}. The process goes
similar with the proof of Lemma 3.3 in (\cite{SW}) which is about
the local existence of the KdV geometric flow .
\begin{proof}
Assume $N$ is compact and we imbed $N$ into $\mathbb{R}^n$. If
$u_0:S^1\rightarrow N$ is $C^\infty$, then from Lemma \ref{lm:2.1}
we have that the Cauchy problem (\ref{eq:2.1}) admits a smooth
solution $u_\epa$ which satisfies the estimates in Lemma
\ref{lm:2.1}. Hence by Lemma \ref{lm:2.1} and Lemma \ref{lm:w},  for
any integer $p>0$ and $\epa\in(0,1]$ we have:
\begin{eqnarray}\label{eq:ww}
\sup_{t\in[0,T]} ||u_\epa||_{W^{p,2}(N)}\leq C_p(N,u_0),
\end{eqnarray}
where $C_p(N,u_0)$ does not depend on $\epa$. Hence, by sending
$\epa\rightarrow 0$ and applying the embedding theorem of Sobolev
spaces to $u$, we have $u_\epa\rightarrow u\in C^p(S^1\times[0,T])$
for any $p$. It is easy to check that $u$ is a solution to the
Cauchy problem (\ref{eq:0.1}).

If $u_0:S^1\rightarrow N$ is not $C^\infty$, but $u_0\in
W^{k,2}(S^1,N)$, we may always select a sequence of $C^\infty$ maps
 $u_{i0}:S^1\rightarrow N$, $(i=1,2,\cdots,n\cdots)$, such that
$$u_{i0}\rightarrow u_0\quad\text{in}\quad W^{k,2},\quad\text{as}\quad i\rightarrow \infty.$$
Thus following from Lemma \ref{lm:w} we have
\begin{eqnarray*}
||\nabla_x u_{i0}||_{H^{k-1}}\rightarrow ||\nabla_x
u_0||_{H^{k-1}},\quad\text{as}\quad i\rightarrow\infty.
\end{eqnarray*}
Thus there exists a unique, smooth solution $u_i$, defined on time
interval $[0,T_i]$, of the Cauchy problem (\ref{eq:2.1}) with $u_0$
replaced by $u_{i0}$. Furthermore, from Lemma \ref{lm:2.1} we could
obtain that if $i$ is large enough, then there exists a uniform
positive lower bound of $T_i$, denoted by $T$, such that the
following inequality holds uniformly with respect to large enough
$i$:
\begin{eqnarray*}
\sup_{t\in[0,T]} ||\nabla u_i(t)||_{H^{k-1}}\leq
C(T,||u_{0x}||_{H^{k-1}}).
\end{eqnarray*}
Hence from Lemma \ref{lm:w} we deduce
\begin{eqnarray}\label{eq:www}
\sup_{t\in[0,T]} ||Du_i(t)||_{W^{k-1,2}}\leq
C(T,||u_{0x}||_{W^{k-1,2}}),
\end{eqnarray}
and by (\ref{eq:2.1}) we have
$${du_i\over dt}\in L^2([0,T],W^{k-3,2}(S^1,N)).$$
By Sobolev theorem, it is easy to see that $u_i\in
C^{0,{1\over2}}([0,T],W^{k-3,2}(S^1,N)).$

Interpolating the spaces $L^\infty([0,T],W^{k,2}(S^1,N)) $ and
$C^{0,{1\over2}}([0,T],W^{k-3,2}(S^1,N))$ yields that
\begin{equation}\label{eq:vv}
u_i\in
C^{0,\gamma}([0,T],W^{k-6\gamma,2}(S^1,N))\quad\text{for}\quad
\gamma\in(0,{1\over2}).
\end{equation}
Therefore when letting $\gamma$ small while using Rellich's theorem
and the Ascoli-Arzela theorem,
 from (\ref{eq:www}) and (\ref{eq:vv}) we obtain that there exists $$u\in
L^\infty([0,T],W^{k,2}(S^1,N))\cap C([0,T],W^{k-1,2}(S^1,N))$$ such
that
\begin{eqnarray*}
u_i&\rightarrow& u\quad[\text{weakly}^*]\quad\text{in}\quad
L^\infty([0,T],W^{k,2}(S^1,N)),\nn\\
u_i&\rightarrow& u\quad\text{in}\quad C([0,T],W^{k-1,2}(S^1,N))
\end{eqnarray*}
upon extracting a subsequence and re-indexing if necessary.

It remains to verify that $u$ is a strong solution to
(\ref{eq:0.1}). We need to check that for any $v\in
C^\infty(S^1\times[0,T],\mathbb{R}^n)$ there holds
\begin{eqnarray*}
\int_0^T\int_{S^1}\la u_t,v\ra dx dt&=&\int_0^T\int_{S^1}\la
\nabla^2_xu_x,v\ra dx dt + {\ro}\int_0^T\int_{S^1}\la
Ric(u)(u_x,u_x)u_x,v\ra dx dt.
\end{eqnarray*}

First we always have that for each $u_i$
\begin{eqnarray*}
\int_0^T\int_{S^1}\la u_{it},v\ra dx dt&=&\int_0^T\int_{S^1}\la
\nabla^2_xu_{ix},v\ra dx dt + {\ro}\int_0^T\int_{S^1}\la
Ric(u_i)(u_{ix},u_{ix})u_{ix},v\ra dx dt..
\end{eqnarray*}
For each $y\in N\subset\mathbb{R}^n$, let $P(y)$ be the orthogonal
projection from $\mathbb{R}^n$ onto $T_yN$, we have
\begin{eqnarray}
\nabla_x u_x&=&P(u)u_{xx},\nn\\
\nabla^2_xu_x&=&P(u)(P(u))_xu_{xx}+P(u)u_{xxx},\label{eq:2.23}\\
\nabla^2_xu_{ix}&=&P(u_i)(P(u_i))_xu_{ixx}+P(u_i)u_{ixxx}.\label{eq:2.24}
\end{eqnarray}

Hence we have
\begin{eqnarray}\label{eq:2.26*}
&&{}\int_0^T\int_{S^1}|\la\nabla^2_xu_x,v\ra- \la\nabla^2_xu_{ix},v\ra| dx dt\nn\\
&\leq&\int_0^T\int_{S^1}|\la\big( P(u)-P(u_i)\big)u_{xxx}, v\ra|dx dt+\int_0^T\int_{S^1}|\la P(u_i)\big(u_{xxx}-u_{ixxx}\big), v\ra|dx dt\nn\\
&&{}+ \int_0^T\int_{S^1}|\la \big(P(u)(P(u))_x-P(u_i)(P(u_i))\big)_xu_{xx},v\ra|dx dt\nn\\
&&{}+ \int_0^T\int_{S^1}|\la
P(u_i)(P(u_i))_x\big(u_{xx}-u_{ixx}\big),v\ra|dx dt.
\end{eqnarray}
Moreover,
\begin{eqnarray}\label{eq:2.25}
&&{}\int_0^T\int_{S^1}\left|\la Ric(u)(u_x,u_x)u_x,v\ra-\la Ric({u_i})(u_{ix},u_{ix})u_{ix},v\ra\right| dx dt\nn\\
&\leq&\int_0^T\int_{S^1}|
Ric(u)(u_x,u_x)u_x-Ric({u_i})(u_{ix},u_{ix})u_{ix}||v|dx dt.
\end{eqnarray}
Since $N$ is compact, it is obviously that
$$ ||P(\cdot)D(P(\cdot))||_{L^\infty(N)}<\infty|;\quad ||Ric(\cdot)||_{L^\infty(N)}<\infty.$$
Hence we obtain that each term on the right hand side of
(\ref{eq:2.26*}) and (\ref{eq:2.25}) converges zero as $i$ goes to
infinity. This implies that
\begin{eqnarray*}
&&{}\lim_{i\rightarrow \infty}\int_0^T\int_{S^1}\la \nabla^2_xu_{ix},v\ra dx dt=\int_0^T\int_{S^1}\la \nabla^2_xu_x,v\ra dx dt;\\
&&{}\lim_{i\rightarrow \infty}\int_0^T\int_{S^1}\la
Ric({u_i})(u_{ix},u_{ix})u_{ix},v\ra dx dt=\int_0^T\int_{S^1}\la
Ric(u)(u_x,u_x)u_x,v\ra dx dt.
\end{eqnarray*}
On the other hand, we also have
\begin{eqnarray*}
\lim_{i\rightarrow \infty}\int_0^T\int_{S^1}\la u_{it},v\ra dx
dt=-\int _0^T\int_{S^1}\la u,v_t\ra dx dt +\int_{S^1}(\la
u(T),v(T)\ra-\la u_0,v(0)\ra) dx .
\end{eqnarray*}
Thus, from the above equalities we have
\begin{eqnarray}\label{eq:2.26}
&&{} \int_0^T\int_{S^1}\la \nabla^2_xu_x,v\ra dx dt +
{\ro}\int_0^T\int_{S^1}\la Ric(u_x,u_x)u_x,v\ra dx dt\nn\\
&=&-\int _0^T\int_{S^1}\la u,v_t\ra dx dt+\int_{S^1}(\la
u(T),v(T)\ra-\la u_0,v(0)\ra) dx .
\end{eqnarray}
Note that $\nabla^2_x u_x\in L^2(S^1\times[0,T], \mathbb{R}^n)$,
thus (\ref{eq:2.26}) implies $u_t\in
L^2(S^1\times[0,T],\mathbb{R}^n)$. Therefore for any smooth function
$v$ we always have
\begin{eqnarray*}
\int_0^T\int_{S^1}\la u_t,v\ra dx dt&=&\int_0^T\int_{S^1}\la
\nabla^2_xu_x + {\ro}Ric(u_x,u_x)u_x,v\ra dx dt,
\end{eqnarray*}
which means that $u$ is a strong solution of (\ref{eq:2.1}).

It is easy to see that if $N$ is a noncompact manifold with bounded
geometry and the domain is $S^1$, we could find a compact subset of
$N$, denoted by $\Omega$, such that $u_0(S^1)\subset \Omega\subset
\mathbb{R}^n$. Therefore we could repeat the same process as in the
case $N$ is compact then we obtain the same results and complete the
proof.
\end{proof}
Now we could show the uniqueness of the solutions and prove Theorem
{\ref{thm:1.1}}.

\noindent{\bf\textit{Proof of Theorem \ref{thm:1.1}}}. Without loss
of generality, we always assume that $N$ is compact, since $u(x,
t)\in L^\infty([0, T], H^4(S^1, N))$ implies that $\{u(x, t): (x, t)
\in S^1\times[0, T]\}\subset\subset N$. We regard $N$ as a
submanifold of $\mathbb{R}^n$. Let $u$, $v:S^1\times[0,T]\rightarrow
N\subset\mathbb{R}^n$ be two solutions of (\ref{eq:0.1}) such that
$u(x,0)=v(x,0)=u_0$ and $u,v\in L^\infty([0,T],W^{k,2}(S^1,N))$ for
$k\geq4$. Let $w=u-v$ which makes sense as a $\mathbb{R}^n$-valued
function. It is worthy to point out that the Ricci curvature  $Ric$
here should be regarded as operators on $\mathbb{R}^n$, such that
$Ric(u)(u_x,u_x)u_x-Ric(v)(v_x,v_x)v_x$ makes sense in
$\mathbb{R}^n$.

From (\ref{eq:2.23}) we have that
\begin{eqnarray}\label{eq:2.27}
\nabla_x^2u_x=P(u)u_{xxx}+P(u)(P(u))_xu_{xx}.\nn
\end{eqnarray}
Thus
\begin{eqnarray*}
u_t=P(u)u_{xxx}+P(u)(P(u))_xu_{xx}+{\ro}Ric(u)(u_x,u_x)u_x.
\end{eqnarray*}
Hence we have
\begin{eqnarray}\label{eq:2.28}
w_t&=&P(u)w_{xxx}+[P(u)-P(v)]v_{xxx}\nn\\
&&+P(u)(P(u))_xw_{xx}+\Big(P(u)(P(u))_x-P(v)(P(v))_x\Big)v_{xx}\nn\\
&&{}+{\ro}\Big(Ric(u)(u_x,u_x)u_x-Ric(v)(v_x,v_x)v_x\Big).
\end{eqnarray}

We could proove that there exists a constant $C$ which depends only
on $N$ and $||u||_{W^{4,2}}$ and $||v||_{W^{4,2}}$ such that
\begin{eqnarray}\label{eq:2.29}
{d\over dt}||w||^2_{W^{1,2}}\leq C ||w||^2_{W^{1,2}},
\end{eqnarray}
then by Gronwall's inequality we could obtain that $w\equiv0$ and
obtain the uniqueness of the solutions. To see this, we start by
calculating
\begin{eqnarray}\label{eq:2.30}
&&{}{1\over 2}{d\over dt}\int |w_x|^2dx=-\int \la w_{xx},w_t\ra dx\nn\\
&=&-\int\la w_{xx}, P(u)w_{xxx}\ra dx-\int\la w_{xx},[P(u)-P(v)]v_{xxx}\ra dx\nn\\
&&{}-\int\la w_{xx},P(u)(P(u))_xw_{xx}\ra dx-\int\la w_{xx},[P(u)(P(u))_x-P(v)(P(v))_x]v_{xx}\ra dx\nn\\
&&{}-{\ro}\int\la
w_{xx},\big(Ric(u)(u_x,u_x)u_x-Ric(v)(v_x,v_x)v_x\big)\ra
dx.\end{eqnarray} Then, similar with the process in \cite{SW}, for
the first four terms of ({\ref{eq:2.30}}), we have :
\begin{eqnarray}\label{eq:2.31}
&&{}-\int\la w_{xx}, P(u)w_{xxx}\ra dx-\int\la w_{xx},[P(u)-P(v)]v_{xxx}\ra dx\nn\\
&&{}-\int\la w_{xx},P(u)(P(u))_xw_{xx}\ra dx-\int\la w_{xx},[P(u)(P(u))_x-P(v)(P(v))_x]v_{xx}\ra dx\nn\\
&\leq& C
\left(\int|w_x|^2dx+\int|w_x||w|dx+||w||_{L^\infty}\int(|w_x|+|w|)|v_{xxx}|dx
+\int|w_x|^2|v_{xxx}|dx\right)\nn\\
&\leq&C||w||^2_{W^{1,2}},
\end{eqnarray}
where $C$ depends on $N$, $||u||_{W^{3,2}}$ and $||v||_{W^{4,2}}$.
The calculations about these estimates are same with that in
\cite{SW} and we omit the details here. For the last term of
(\ref{eq:2.30}), we have

\begin{eqnarray}\label{eq:2.32}
&&{}-{1\over2}\int\la
w_{xx},\big(Ric(u)(u_x,u_x)u_x-Ric(v)(v_x,v_x)v_x\big)\ra dx\nn\\
&=&-{1\over2}\int Ric(u)(u_x,u_x)\la w_{xx},w_x\ra dx\nn\\
&&{}-{1\over2}\int \big(Ric(u)(u_x,u_x)-Ric(v)(v_x,v_x)\big)\la
w_{xx},v_x\ra dx\nn\\
&=&{1\over4}\int D_x(Ric(u)(u_x,u_x))\la w_{x},w_x\ra
dx\nn\\
&&{}+{1\over2}\int \big(Ric(u)(u_x,u_x)-Ric(v)(v_x,v_x)\big)_x\la
w_{x},v_x\ra dx\nn\\
&&{}+{1\over2}\int \big(Ric(u)(u_x,u_x)-Ric(v)(v_x,v_x)\big)\la
w_{x},v_{xx}\ra dx\nn\\
&\leq&C\left(\int |w_x|^2dx+\int
|w_x||v_x|dx+\int|w_x||v_{xx}|dx\right)\nn\\
&\leq& C||w||_{W^{1,2}}^2,
\end{eqnarray}
where $C$ depends on $N$, $||u||_{W^{3,2}}$ and $||v||_{W^{3,2}}$.

Combining (\ref{eq:2.31}) and (\ref{eq:2.32}) yields
\begin{eqnarray}\label{eq:2.33}
{d\over dt}\int |w_x|^2dx&\leq& C||w||_{W^{1,2}}^2,
\end{eqnarray}
where $C$ depends on $N$, $||u||_{W^{3,2}}$ and $||v||_{W^{4,2}}$.

Moreover, by a similar argument we could obtain that
\begin{eqnarray}\label{eq:2.34}
{d\over dt}\int |w|^2dx\leq C||w||^2_{W^{1,2}},
\end{eqnarray}
where $C$ depends on $N$, $||u||_{W^{3,2}}$ and $||v||_{W^{3,2}}$.
We omit the detail.

Hence we have
\begin{eqnarray}
{d\over dt}||w||^2_{W^{1,2}}\leq C||w||^2_{W^{1,2}},\nn
\end{eqnarray}
where $C$ depends on $N$, $||u||_{W^{4,2}}$ and $||v||_{W^{4,2}}$.
This implies that $w\equiv0$ since $w(x,0)=0$, i.e. the solution is
unique.

Thus it suffices to show that$\nabla^{k-1}_xu_x\in
C([0,T];L^2(S^1,TN))$ for $k\geq4$. In the proof of Lemma
\ref{lem:2.2} we have seen that the solution $u\in
L^\infty([0,T],H^{k}(S^1,N))\cap C([0,T],H^{k-1}(S^1,N)),$ thus by
the discussion about (\ref{eq:ww}), (\ref{eq:www}) and the equation
of the new geometric flow, we could easily get that
$${d\over dt}||\nabla^{k-1}_xu_x||^2_{L^2}\leq C,$$
which implies that
$$||\nabla_x^{k-1}u_x(t,x)||^2_{L^2(S^1,TN)}\leq ||\nabla^{k-1}_xu_x(0,x)||^2_{L^2(S^1,TN)}+Ct.$$

Hence we obtain
$$\lim_{t\rightarrow 0}\sup||\nabla_x^{k-1}u_x(t,x)||^2_{L^2(S^1,TN)}\leq ||\nabla^{k-1}_xu_x(0,x)||^2_{L^2(S^1,TN)}.$$

On the other hand, $u\in L^\infty([0,T],H^{k}(S^1,N))\cap
C([0,T],H^{k-1}(S^1,N))$ implies that, with respect to $t$,
$\nabla^{k-1}_xu_x(t,x)$ is weakly continuous in $L^2(S^1,TN)$, we
have
$$||\nabla^{k-1}_xu_x(0,x)||^2_{L^2(S^1,TN)}\leq \lim_{t\rightarrow0}\inf||\nabla^{k-1}_xu_x(t,x)||^2_{L^2(S^1,TN)}.$$


Thus,
$$\lim_{t\rightarrow0} ||\nabla^{k-1}_xu_x(t,x)||^2_{L^2}=||\nabla^{k-1}_xu_x(0,x)||^2_{L^2},$$
which implies that $\nabla^{k-1}_xu_x(t,x)$ is continuous in
$L^2(S^1,TN)$ at $t=0$. Now by the uniqueness of $u(t,x)$, we get
that $\nabla_x^{k-1}u_x(t,x)$ is continuous at each $t\in[0,T]$,
i.e. $u\in C([0,T],H^k(S^1,N))$ for all $k\geq4$. Thus we complete
the proof of Theorem \ref{thm:1.1}. However, if $k\leq3$, we could
not get the continuity of $||u||_{H^k}$
about $t$ on $[0,T]$ without the uniqueness of the solutions.  \hspace*{\fill}$\Box$\\

We are now ready to proof Theorem \ref{thm:1.2}.
\begin{proof} We only discuss the case that $Ric\leq-\ld<0$ here. For $Ric\geq\ld>0$, the process
is easier. To show the existence of the Cauchy problem
(\ref{eq:0.1}) with an initial map $u_0\in H^4(S^1, N)$, we first
consider the following Cauchy problems:
\begin{equation}\label{eq:2.35}
\left\{
\begin{aligned}
&u_t=
\nabla_x^2u_x+{\ro}Ric(u_x,u_x)u_x,\quad x\in S^1;\\
&u(x,0)=u_0^i(x).
\end{aligned}\right.
\end{equation}
Here $u_0^i\in C^\infty(S^1, N)$ and $\|u_0^i-u_0\|_{H^4}\rightarrow
0$. By (ii) in Lemma {\ref{lm:2.1}} we know that for each $i$ and
any $k\geq 5$, (\ref{eq:2.35}) admits a local solution $u^i\in
L^\infty([0, T^{\max}_i), H^k(S^1, N))$, where
$T_i^{\max}=T_i^{\max}(S^1, \|u_0^i\|_{H^5})$ is the maximal
existence interval of $u^i$.

As $N$ may not be compact, we let $\Omega_i\triangleq \{p\in
N:\text{dist}_N(p,u_0^i(S^1))<1\}$, which is an open subset of $N$
with compact closure $\bar{\Omega}_i$. Denote
$$\Omega_\infty\triangleq \{p\in
N:\text{dist}_N(p,u_0(S^1))<1\}\quad\mbox{and}\quad\Omega_0
\triangleq \{p\in N: \text{dist}_N(p,\Omega_\infty)<1\}.$$ Since
$\|u_0^i-u_0\|_{H^4}\rightarrow 0$, then $\Omega_i \subset\subset
\Omega_0$ as $i$ is large enough. Let
 $$T'_i=\sup\{t>0: u^i(S^1,t)\subset\Omega_i\}.$$
By the same argument as in Lemma \ref{lm:2.1} we can show that there
holds true for all $t\in[0,T_i]$
\begin{eqnarray*}{d\over
dt} \left(-{1\over\ld}\sum_{s=0}^2\int
Ric(\nabla_x^su^i_x,\nabla_x^su^i_x)dx\right)\leq{C(\Omega_0,\lambda)}
\sum_{l=2}^4\left(-{1\over\ld}\sum_{s=0}^2\int
Ric(\nb_x^su^i_x,\nb_x^su^i_x)dx\right)^l.
\end{eqnarray*}
If we let $f^i(t)=-{1\over\ld}\sum_{s=0}^2\int
Ric(\nabla_x^su^i_x,\nabla_x^su^i_x)dx+1$ and
$g^i(t)=||u^i_x||^2_{H^2}+1$, then we have
\begin{eqnarray}\label{eq:2.36}
{df^i\over dt}\leq C(\Omega_0)(f^i)^4.
\end{eqnarray}
Moreover, since $\Omega_i\subset\subset \Omega_0$ and
$\overline{\Omega_0}$ is compact, we have
$$f^i(0)=-{1\over\ld}\sum_{s=0}^2\int Ric(\nb_x^su_{0x}^i,\nb_x^su_{0x}^i)dx
+1\leq C(\Omega_0,\ld)||u^i_{0x}||_{H^2}^2+1 .$$

It follows from the above differential inequality (\ref{eq:2.36})
that there holds true
$$g^i(t)\leq f^i(t)\leq \left(\frac{(f^i(0))^3}{1-3(f^i(0))^3C(\Omega_0,\ld)t}\right)^{\frac{1}{3}},$$
as
$$t<\frac{1}{3(f^i(0))^3C(\Omega_0,\ld)}.$$
Then, there exists constants
\begin{eqnarray*}0<T_0^i&=&T_0^i\left(\Omega_0,
||u_{0x}||_{H^2},\lambda\right)\\
&=&\frac{1}{4(\lambda||u_{0x}||_{H^2}^2+1)^3C(\Omega_0,\ld)}\leq\frac{1}{4(f^i(0))^3C(\Omega_0,\ld)},
\end{eqnarray*}
and $C^i_0= 4^{\frac{1}{3}}f^i(0)>0$ such that
\begin{eqnarray*}
\lambda||u^i_{x}||^2_{H^2}\leq \sum_{s=0}^2\int
Ric(\nb_x^su^i_{0x},\nb_x^su^i_{0x})dx \leq C^i_0\leq
\widetilde{C}^i_0,\quad t\in[0,\text{min}(T^i_0,T'_i)],
\end{eqnarray*}
where
$\widetilde{C}^i_0=4^{\frac{1}{3}}\left(C(\Omega_0,\ld)||u^i_{0x}||_{H^2}^2+1\right).$

 For $k\geq 3$, there exists $$0<C^i_{k-2}=C_{k-2}\left(\lambda,-{1\over\ld}\sum_{s=0}^k\int
Ric(\nb_x^su^i_{0x},\nb_x^su^i_{0x})dx\right)\leq
\widetilde{C}_{k-2}\left(\lambda,\Omega_0,||u_{0x}^i||_{H^2}\right)$$
such that for $t\in[0,\text{min}(T^i_0,T'_i)]$
\begin{eqnarray*}
||u^i_{x}||_{H^k}\leq -{1\over\ld}\sum_{s=0}^k\int
Ric(\nb_x^su^i_{0x},\nb_x^su^i_{0x})dx\leq
C^i_{k-2}\leq\widetilde{C}_{k-2}\left(\lambda,\Omega_0,||u_{0x}^i||_{H^2}\right).
\end{eqnarray*}
Since $\|u_0^i-u_0\|_{H^4}\rightarrow 0$, when $i$ is large enough
we have
$$T_0=\frac{1}{4(\|u_{0x}\|_{H^2}^2+1 +\delta_0)C(\Omega_0)}<T^i_0,$$
where $\delta_0$ is a small positive number. It is easy to see that,
as $i$ is large enough,

\begin{equation}\label{eq:2.40}\widetilde{C}^i_0\leq
\widetilde{C}_0(\|u_{0x}\|_{H^2})+\delta_0\quad\mbox{and}\quad
\widetilde{C}^i_1\leq \widetilde{C}_1(\|u_{0x}\|_{H^3})+\delta_0.
\end{equation}

Note that we always have $T_i^{\max}> \text{min}(T_0,T'_i)$ when $i$
is large enough. Otherwise, by Lemma \ref{lem:2.2} we can find a
time-local solution $u_1$ of (\ref{eq:0.1}) and $u_1$ satisfies the
initial value condition
$$u_1(x,T_i^{\max}-\epsilon)=u(x,T_i^{\max}-\epsilon),$$
where $0<\epsilon<T_i^{\max}$ is a small number. Then by the local
existence theorem, $u_1$ exists on the time interval
$(T_i^{\max}-\epsilon,T_i^{\max}-\epsilon+\eta)$ for some constant
$\eta>0$. The uniform bounds on $||u_x||_{H^2}$ and $||\nabla_x^m
u_x ||_{L^2}$ (for all $m>2$) implies that $\eta$ is independent of
$\epsilon$. Thus, by choosing $\epsilon$ sufficiently small, we have
$$T_i^e=T_i^{\max}-\epsilon+\eta>T_i^{\max}.$$
By the uniqueness result, we have that $u_1(x,t)=u(x,t)$ for all
$t\in[T_i^{\max}-\epsilon,T_i^e)$. Thus we get a solution of the
Cauchy problem (\ref{eq:0.1}) on the time interval $[0,T_e)$, which
contradicts the maximality of $T_i^{\max}$.

Now we need to show that $T'_i$ have a uniform lower bound as $i$ is
large enough. For each large enough $i$, if $T'_i\geq T_0$ we obtain
the lower bound. Otherwise, by the same argument as in Lemma
\ref{lm:2.1} we have
$$T'_i\geq \frac{1}{\mathcal{M}_i}$$
where $$\mathcal{M}_i= \sup_{[0, \text{min}(T_0,T'_i)]}
||u_t^i||_{L^\infty}\leq C \sup_{[0, \text{min}(T_0,T'_i)]}
||u^i_t||^a_{H^1}\sup_{[0, \text{min}(T_0,T'_i)]}
||u^i_t||^{1-a}_{L^2}\equiv M_i.$$ It should be pointed out that to
derive the estimates $L^\infty$ estimates on $||u^i_t(s)||_{L^2}$
and $||u^i_t(s)||_{H^1}$ we need only to have $u^i\in L^\infty([0,
\text{min}(T_0,T'_i)], H^4(S^1, N))$, since the equation of the
geometric flow (\ref{eq:0.1}) is a third-order dispersive equation.
It is not difficult to see from (\ref{eq:2.40}) that there exists a
positive constant $M(\Omega_0, ||u_{0x}||_{H^3})$ such that, as $i$
is large enough,
 $$M_i(\Omega_0, ||u^i_{0x}||_{H^3})\leq M(\Omega_0, ||u_{0x}||_{H^3}),$$
since $\|u_0^i-u_0\|_{H^4}\rightarrow 0$.

Let $T^*=\text{min}(T_0, \frac{1}{M})$. As $i$ is large enough, we
always have $u^i \in L^\infty([0, T^*], H^4(S^1, N))$. By letting
$i\rightarrow \infty$ and taking the same arguments as in Lemma
\ref{lem:2.2}, we know there exists $u \in L^\infty([0, T^*],
H^4(S^1, N))$ such that
 $$u^i \rightarrow u\quad[\text{weakly}^*]\quad\text{in}\quad L^\infty([0,T^*],H^4(S^1,N))$$
and $u$ is a local solution to (\ref{eq:0.1}). Theorem \ref{thm:1.1}
guarantees that the local solution is unique and it is continuous
with respect to $t$, i.e., $u\in C([0, T^*], H^4(S^1, N))$. Thus, we
finish the proof of the theorem.
\end{proof}

\section{Conversation Laws}  In this section, we let $(N,h)$ be a locally symmetric space
with metric $h$. Then it is easy to see that $\nb Ric\equiv0$ on $N$
since $\nb R\equiv0$. Then for a smooth solution $u(x,t):S^1\times
(0,T)\rightarrow N$ of the Cauchy problem (\ref{eq:0.1}), we will
derive in this section the conservation laws $E_1(u),E_2(u)$ and the
semi-conservation law $E_3(u)$ introduced in Sec.1.

Precisely, we have the following results:
\begin{lem}\label{lm:1.1}
Assume $N$ is a locally symmetric space. If $u:S^1\times
(0,T)\rightarrow N$ is a smooth solution of the Cauchy problem of
the new geometric flow (\ref{eq:0.1}), then
$${dE_1\over dt}={d\over dt}\int Ric(u_x,u_x)dx=0,$$ in other
words, $E_1(u)=E_1(u_0)$ for all $t\in(0,T)$.
\end{lem}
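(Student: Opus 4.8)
The plan is to show that $E_1(u) = \int_{S^1} Ric(u_x, u_x)\,dx$ is conserved by differentiating under the integral sign and repeatedly integrating by parts, exploiting the parallelism of the Ricci tensor ($\nabla Ric \equiv 0$) on the locally symmetric space $N$. The crucial structural input is Proposition property (1), namely $Z\big(Ric(X,Y)\big) = Ric(\nabla_Z X, Y) + Ric(X, \nabla_Z Y)$, which lets me treat $Ric(\cdot,\cdot)$ essentially like a parallel metric: covariant derivatives pass through it by a Leibniz rule with no extra terms. I will also use the compatibility identity $\nabla_t u_x = \nabla_x u_t$ established in the excerpt.

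First I would compute
\begin{eqnarray*}
\frac{d}{dt}\int_{S^1} Ric(u_x, u_x)\,dx = 2\int_{S^1} Ric(\nabla_t u_x, u_x)\,dx = 2\int_{S^1} Ric(\nabla_x u_t, u_x)\,dx,
\end{eqnarray*}
where the first equality uses property (1) applied in the $t$-direction, and the second uses $\nabla_t u_x = \nabla_x u_t$. Then I integrate by parts in $x$ (again using property (1) to move $\nabla_x$ across $Ric$) to obtain
\begin{eqnarray*}
\frac{d}{dt}\int_{S^1} Ric(u_x, u_x)\,dx = -2\int_{S^1} Ric(u_t, \nabla_x u_x)\,dx.
\end{eqnarray*}
Note that this is the same opening computation carried out in the proof of Lemma \ref{lm:2.1}, but now with $\epsilon = 0$, so I can essentially transcribe that calculation.

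Next I substitute the flow equation $u_t = \nabla_x^2 u_x + \rho\, Ric(u_x, u_x)u_x$ into $-2\int Ric(u_t, \nabla_x u_x)\,dx$. This produces two terms. The dispersive term gives $-2\int Ric(\nabla_x^2 u_x, \nabla_x u_x)\,dx$, which equals $-\int \nabla_x\big(Ric(\nabla_x u_x, \nabla_x u_x)\big)\,dx$ by property (1), and hence integrates to zero over the circle $S^1$ since it is an exact $x$-derivative of a periodic function. The nonlinear term gives $-2\rho\int Ric(u_x,u_x)\,Ric(u_x, \nabla_x u_x)\,dx$, which equals $-\frac{\rho}{2}\int \nabla_x\big(|Ric(u_x,u_x)|^2\big)\,dx$—again an exact derivative of a periodic scalar, so it also vanishes. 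Combining, $\frac{d}{dt}E_1(u) = 0$.

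I do not expect any genuine obstacle here: the only subtlety is ensuring that every step where $\nabla_x$ or $\nabla_t$ crosses $Ric$ is justified by $\nabla Ric \equiv 0$ (valid because $N$ is locally symmetric, so $\nabla R \equiv 0$ and hence $\nabla Ric \equiv 0$), and that the two surviving terms are recognized as total $x$-derivatives so that periodicity on $S^1$ kills them. The mild bookkeeping point to get right is the factor arising from $Ric$ being symmetric bilinear, so that the $t$-derivative and the integration-by-parts each contribute a factor of $2$, and the nonlinear term collapses into the derivative of the square $|Ric(u_x,u_x)|^2$ with the correct constant $\tfrac{\rho}{2}$. Since $u$ is assumed smooth, all integrations by parts and differentiations under the integral sign are fully justified.
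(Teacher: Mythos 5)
Your proposal is correct and follows essentially the same route as the paper's own proof of Lemma \ref{lm:1.1}: differentiate under the integral, use $\nabla_t u_x=\nabla_x u_t$ and one integration by parts to reach $-2\int Ric(u_t,\nabla_x u_x)\,dx$, substitute the flow equation, and recognize both resulting terms as exact $x$-derivatives (via $\nabla Ric\equiv 0$) that vanish by periodicity on $S^1$. The constants $2$ and $\rho/2$ match the paper's computation exactly, so there is nothing to correct.
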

\begin{proof} With the assumption of $N$, we have $\nabla Ric=0$.
Hence
\begin{eqnarray*}
{dE_1\over dt}&=&{d\over dt}\int Ric(u_x,u_x)dx
=2\int Ric(\nb_tu_x,u_x)\\
&=&2\int Ric(\nabla_x u_t,u_x) dx=-2\int Ric(\nb_xu_x,u_t).
\end{eqnarray*}
Substituting (\ref{eq:0.1}) into above yields
\begin{eqnarray*}
{dE_1\over dt}&=&-2\int Ric(\nabla_x u_x, \nabla_x^2 u_x) dx-2\ro\int Ric(\nabla_x u_x, u_x)Ric(u_x,u_x) dx\\
&=&-\int\nb_x(Ric(\nb_xu_x,\nb_xu_x))-{\ro\over2}\int\nb_x\left(Ric(u_x,u_x)^2\right)\\
&=&0.
\end{eqnarray*}
This completes the proof. \end{proof}

\begin{lem}\label{lm:3.2}
Assume $N$ is a locally symmetric space. If $u:S^1\times
(0,T)\rightarrow N$ is a smooth solution of the Cauchy problem of
the new geometric flow (\ref{eq:0.1}) and let
$$E_2(u)=\int Ric(\nabla_x u_x,\nabla_x u_x)dx-{\ro\over2}\int Ric(u_x,u_x)^2 dx.$$
Then $E_2(u)$ is conserved and we have $${d\over dt}E_2(u)=0.$$
\end{lem}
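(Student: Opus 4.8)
The plan is to differentiate the two pieces of $E_2$ separately, reduce every integrand to scalars built from $Ric$ and $R$ contracted against $u_x$ and its covariant $x$-derivatives, and then show that the surviving terms either cancel between the two pieces or are exact $x$-derivatives that integrate to zero over $S^1$. Throughout I use that $N$ locally symmetric gives $\nabla R\equiv0$, hence $\nabla Ric\equiv0$, so the Proposition (\ref{eq:3.0}) applies, together with $\nabla_t u_x=\nabla_x u_t$ and the commutation $\nabla_t\nabla_x u_x=\nabla_x^2u_t+R(u_t,u_x)u_x$.

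First I would treat $\frac{d}{dt}\int Ric(\nabla_x u_x,\nabla_x u_x)\,dx=2\int Ric(\nabla_t\nabla_x u_x,\nabla_x u_x)\,dx$. After commuting derivatives, integrating by parts with $\nabla Ric\equiv0$, and invoking property (2) of (\ref{eq:3.0}) together with $Ric(u_x,R(\nabla_x u_x,u_x)u_x)\equiv0$, this is exactly the computation already carried out in (\ref{eq:2.2}) with $\epsilon=0$. The only term there that is neither an exact derivative nor killed by curvature symmetries is the one proportional to $(\nabla_x R)(\nabla_x u_x,u_x)u_x$; since $N$ is locally symmetric, $\nabla R\equiv0$ and it vanishes. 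Substituting (\ref{eq:0.1}) for $u_t$ and discarding the exact-derivative term $Ric(\nabla_x^2 u_x,\nabla_x^3 u_x)=\tfrac12\nabla_x(Ric(\nabla_x^2 u_x,\nabla_x^2 u_x))$, I expect to be left with exactly
$$\frac{d}{dt}\int Ric(\nabla_x u_x,\nabla_x u_x)\,dx=6\ro\int Ric(\nabla_x u_x,\nabla_x u_x)\,Ric(\nabla_x u_x,u_x)\,dx.$$

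Next I would compute $\frac{d}{dt}\big(-\tfrac{\ro}{2}\int Ric(u_x,u_x)^2\,dx\big)$. Setting $\phi=Ric(u_x,u_x)$ and using $\partial_t\phi=2Ric(\nabla_x u_t,u_x)$, one integration by parts (moving $\nabla_x$ off $u_t$, again with $\nabla Ric\equiv0$ and $\nabla_x\phi=2Ric(\nabla_x u_x,u_x)$) followed by substitution of (\ref{eq:0.1}) turns this into a combination of $\int Ric(\nabla_x u_x,u_x)\,Ric(\nabla_x u_x,\nabla_x u_x)\,dx$ and a cubic term proportional to $\int \phi^2\,Ric(\nabla_x u_x,u_x)\,dx$. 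I expect the quadratic-in-curvature part to equal $-6\ro\int Ric(\nabla_x u_x,\nabla_x u_x)\,Ric(\nabla_x u_x,u_x)\,dx$, canceling the contribution of the first piece, while the cubic term drops because $\phi^2\,Ric(\nabla_x u_x,u_x)=\tfrac16\nabla_x(\phi^3)$ is an exact derivative on $S^1$.

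The bookkeeping of the integrations by parts is routine and follows the template of Lemma \ref{lm:1.1} and (\ref{eq:2.2}). The two genuinely load-bearing observations, which I would verify most carefully, are: (i) that local symmetry $\nabla R\equiv0$ is precisely what annihilates the lone $(\nabla_x R)$ term in the first computation — a term that in Lemma \ref{lm:2.1} was only controlled by a curvature bound rather than killed outright; and (ii) that the leftover cubic term $\int (Ric(u_x,u_x))^2\,Ric(\nabla_x u_x,u_x)\,dx$ is a perfect $x$-derivative and therefore vanishes. Granting these, the two time-derivatives cancel term by term and $\frac{d}{dt}E_2(u)=0$.
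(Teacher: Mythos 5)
Your argument is correct and matches the paper's own proof essentially step for step: both reduce $\frac{d}{dt}\int Ric(\nabla_x u_x,\nabla_x u_x)\,dx$ to $6\rho\int Ric(\nabla_x u_x,\nabla_x u_x)\,Ric(\nabla_x u_x,u_x)\,dx$ using $\nabla Ric=\nabla R\equiv0$ together with property (2) of (\ref{eq:3.0}), and both show that differentiating $-\frac{\rho}{2}\int Ric(u_x,u_x)^2\,dx$ produces exactly the opposite quantity plus the exact derivative $\rho^2\,\nabla_x\big(Ric(u_x,u_x)^3\big)$, which integrates to zero over $S^1$. The only cosmetic difference is that you import the cancellations from the computation (\ref{eq:2.2}) with $\epsilon=0$, whereas the paper redoes them in place in Section 3.
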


\begin{proof} We start by differentiating each term of $E_2(u)$ with respect
to $t$. For the first term of $E_2$, after integrating by parts,  we
have
\begin{eqnarray*}
&&{}{d\over dt}\int Ric( \nabla_x u_x,\nabla_x u_x) dx
= 2\int Ric(\nabla_t\nb_xu_x,\nabla_x u_x) dx\\
&=&2\int Ric(\nabla_x\nb_tu_x,\nabla_x u_x) dx+2\int Ric( R(u_t,u_x)u_x,\nabla_x u_x) dx\\
&=&2\int Ric(\nabla^2_xu_t,\nabla_x u_x) dx+2\int Ric( R(u_t,u_x)u_x,\nabla_x u_x) dx\\
&=&2\int Ric( \nabla_x^3 u_x,u_t) dx+2\int Ric( u_t, R(\nabla_x
u_x,u_x)u_x) dx.
\end{eqnarray*}
Substituting the equation (\ref{eq:0.1}) we get
\begin{eqnarray}\label{eq:1.5}
&&{}{d\over dt}\int Ric( \nabla_x u_x,\nabla_x u_x) dx\nn\\
&=&2\int Ric(
\nabla_x^3u_x,\nabla_x^2u_x) dx+2\int Ric(\nabla_x^2u_x,R(\nabla_xu_x,u_x)u_x) dx\nn\\
&&{}+2\ro\int Ric(  Ric(u_x,u_x)u_x,R(\nabla_x u_x,u_x)u_x) dx\nn\\
&&{}+2\ro\int Ric(\nabla_x^3u_x,u_x)Ric(u_x,u_x) dx.
\end{eqnarray}
The first three terms of right hand side of (\ref{eq:1.5}) vanish.
In fact
\begin{eqnarray}
2\int Ric( \nabla_x^3u_x,\nabla_x^2u_x) dx&=&\int\nabla_x\left(Ric(
\nabla_x^2u_x,\nabla_x^2u_x)\right) dx=0\nn
\end{eqnarray}
and
\begin{eqnarray*}
&& \int Ric(\nabla_x^2u_x,R(\nabla_x u_x,u_x)u_x) dx
={1\over2}\int\nb_x\left( Ric(\nabla_x u_x,R(\nabla_x
u_x,u_x)u_x)\right)dx=0
\end{eqnarray*}
 since $\nabla_x Ric=\nb_xR=0$ and $R(\nb_xu_x,\nb_xu_x)=0$.

For the third term of (\ref{eq:1.5}), by the property of Ricci
curvature we obtained before, we have
\begin{eqnarray*}
&&{}\int Ric(  Ric(u_x,u_x)u_x,R(\nabla_x u_x,u_x)u_x) dx\\
&=&\int Ric(u_x,R(\nabla_x u_x,u_x)u_x)\cdot Ric(u_x,u_x) dx=0.
\end{eqnarray*}

Thus, for the last term of (\ref{eq:1.5}), integrating by parts
yields
\begin{eqnarray}\label{eq:1.2}
&&{}{d\over dt}\int Ric( \nabla_x u_x,\nabla_x u_x) dx=2\ro\int Ric(\nabla_x^3u_x,u_x)Ric(u_x,u_x) dx\nn\\
&=&-2\ro\int Ric (\nb_x^2u_x,\nb_xu_x)Ric(u_x,u_x)dx-4\ro\int Ric(\nb_x^2u_x,u_x)Ric(\nb_xu_x,u_x)dx\nn\\
&=&6\ro\int Ric(\nb_x u_x,\nb_xu_x)Ric(\nb_xu_x,u_x)dx.
\end{eqnarray}

Now we consider the second term of $E_2$ and differentiate it with
respect to $t$
\begin{eqnarray*}
&&{}{d\over dt}\int Ric(u_x,u_x)^2 dx\\
&=&4\int Ric(\nb_tu_x,u_x)Ric(u_x,u_x)dx=4\int Ric(\nb_xu_t,u_x)Ric(u_x,u_x)dx\nn\\
&=&-4\int Ric(u_t,\nb_xu_x)Ric(u_x,u_x)dx-8\int
Ric(u_t,u_x)Ric(\nb_xu_x,u_x)dx.
\end{eqnarray*}
Substituting (\ref{eq:0.1}) yields
\begin{eqnarray}\label{eq:1.3}
&&{}-{\ro\over2}{d\over dt}\int Ric(u_x,u_x)^2 dx\nn\\
&=&2\ro\int Ric(\nb_x^2u_x,\nb_xu_x)Ric(u_x,u_x)dx+4\ro\int Ric(\nb_x^2u_x,u_x)Ric(\nb_xu_x,u_x)dx\nn\\
&&{}+{6\ro^2}\int Ric(u_x,\nb_xu_x)|Ric(u_x,u_x)|^2dx.
\end{eqnarray}
Note that after integrating by parts we have
\begin{eqnarray}
&&{}2\ro\int Ric(\nb_x^2u_x,\nb_xu_x)Ric(u_x,u_x)dx=-2\ro\int Ric(\nb_xu_x,\nb_xu_x)Ric(\nb_xu_x,u_x)dx\nn\\
&=& 2\ro\int
Ric(\nb_x^2u_x,u_x)Ric(\nb_xu_x,u_x)dx;\nn\end{eqnarray} while
\begin{eqnarray}
6\ro^2\int Ric(u_x,\nb_xu_x)|Ric(u_x,u_x)|^2dx=\ro^2\int \nb_x\big(
Ric(u_x,u_x)^3\big)dx=0.\nn
\end{eqnarray}
This together with (\ref{eq:1.3}) yields
\begin{eqnarray}\label{eq:1.4}
-{\ro\over2}{d\over dt}\int Ric(u_x,u_x)^2 dx=-6\ro\int
Ric(\nb_xu_x,\nb_xu_x)Ric(\nb_xu_x,u_x)dx.
\end{eqnarray}
Combining (\ref{eq:1.3}) and (\ref{eq:1.4}) we have
$${dE_2\over dt}={d\over dt}\int Ric(\nabla_x u_x,\nabla_x u_x) dx-{\ro\over2}{d\over dt}\int Ric(u_x,u_x)^2 dx=0.$$
This completes the proof.\end{proof}

\begin{rem} It is worthy to point out that the
 two conservation laws hold with the assumption that $N$ is a locally symmetric space.
We do not need the condition that the Ricci curvature on $N$ has a
positive lower bound or a negative upper bound. However, this
condition is required such that the following semi-conservation law
would hold true.
\end{rem}
Precisely, we have

\begin{lem}\label{lm:3.3}
Assume $N$ is a locally symmetric space and the Ricci curvature on
$N$ has a positive lower bound $\lambda>0$ (or a negative upper
bound $-\lambda<0$). If $u:S^1\times (0,T)\rightarrow N$ is a smooth
solution of the Cauchy problem of the new geometric flow
(\ref{eq:0.1}) and let
\begin{eqnarray}\label{eq:E_3}
&&{}E_3(u)=6\int Ric(\nb_x^2u_x,\nb_x^2u_x)dx-20\ro\int Ric(\nb_xu_x,u_x)^2dx\nn\\
&&{}-10\ro\int Ric(\nb_xu_x,\nb_xu_x)Ric(u_x,u_x)dx-4\int
Ric(\nb_xu_x,R(\nb_xu_x,u_x)u_x)dx.
\end{eqnarray}
Then we have
$${dE_3\over dt}\leq C(E_3+1),$$ where $C$ is a constant depends on
$N$, $\lambda$, $E_1(u_0)$ and $||\nabla_x u_x||_{L^2}$.

\end{lem}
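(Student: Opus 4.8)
The plan is to differentiate $E_3(u)$ in $t$ term by term, substitute the flow equation (\ref{eq:0.1}), and reduce everything to an inequality governed by $E_3$ together with the already-established conserved quantities $E_1$ and $E_2$. Throughout I would use the locally symmetric hypothesis in the strong form $\nb R\equiv 0$ (hence $\nb Ric\equiv 0$), the two algebraic identities of the Proposition (\ref{eq:3.0}), the commutation rule $\nb_t\nb_x V-\nb_x\nb_t V=R(u_t,u_x)V$ together with $\nb_t u_x=\nb_x u_t$, and integration by parts on $S^1$ (no boundary terms). The first task is the leading term $6\int Ric(\nb_x^2u_x,\nb_x^2u_x)\,dx$: differentiating and commuting $\nb_t$ past $\nb_x$ twice turns it into $-12\int Ric(u_t,\nb_x^5u_x)\,dx$ plus curvature commutator terms. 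Substituting $u_t=\nb_x^2u_x+\ro\, Ric(u_x,u_x)u_x$, the genuinely top-order piece $\int Ric(\nb_x^2u_x,\nb_x^5u_x)\,dx$ vanishes after integration by parts (it equals $-\int Ric(\nb_x^3u_x,\nb_x^4u_x)\,dx=0$), but the piece $-12\ro\int Ric(Ric(u_x,u_x)u_x,\nb_x^5u_x)\,dx$ survives; this is exactly the contribution that, as noted in the introduction, cannot be controlled by powers of $\|u_x\|_{H^2}$ on its own.

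The heart of the argument is that the three remaining terms of $E_3$, with their specific coefficients $-20\ro$, $-10\ro$ and $-4$, are engineered so that the dangerous high-order contributions produced by their $t$-derivatives cancel the surviving term above together with all other non-absorbable top-order terms. Concretely, I would differentiate $-20\ro\int Ric(\nb_xu_x,u_x)^2\,dx$, $-10\ro\int Ric(\nb_xu_x,\nb_xu_x)Ric(u_x,u_x)\,dx$ and $-4\int Ric(\nb_xu_x,R(\nb_xu_x,u_x)u_x)\,dx$, each time using $\nb_t u_x=\nb_x u_t$, substituting (\ref{eq:0.1}), and integrating by parts to move derivatives off the highest-order factor. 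The identities in (\ref{eq:3.0}) are essential here: the second, $Ric(X,R(Z,W)Y)=Ric(Z,R(X,Y)W)$, is what lets the curvature term $-4\int Ric(\nb_xu_x,R(\nb_xu_x,u_x)u_x)\,dx$ interact with the others, and the relation $Ric(u_x,R(\nb_xu_x,u_x)u_x)\equiv 0$ (already exploited in Lemmas \ref{lm:1.1}--\ref{lm:3.2}) kills several would-be obstructions. After all integrations by parts I expect every term carrying four or more covariant derivatives on a single factor, as well as the surviving fifth-order term, to cancel identically. This bookkeeping is the one step that must be carried out with real care, since it is precisely the choice of the coefficients $6,-20\ro,-10\ro,-4$ that forces the cancellation; it is the main obstacle of the proof.

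Once the top-order terms are gone, what remains is a finite sum of integrals in which no factor carries more than two covariant derivatives, each integrand a product of $Ric$-, $R$- and $\nb_x^j u_x$-factors. These I would estimate by H\"older's inequality together with the interpolation inequalities (\ref{ineq}), bounding $\|u_x\|_{L^\infty}$ and $\|\nb_x u_x\|_{L^\infty}$ in terms of $\|u_x\|_{L^2}$, $\|\nb_x u_x\|_{L^2}$ and $\|\nb_x^2 u_x\|_{L^2}$. The conservation of $E_1$ (Lemma \ref{lm:1.1}) controls $\int Ric(u_x,u_x)\,dx$, hence $\|u_x\|_{L^2}$; combining this with the conservation of $E_2$ (Lemma \ref{lm:3.2}) and the Ricci bound yields, as indicated in the introduction, a uniform bound on $\|\nb_x u_x\|_{L^2}$ independent of $T$. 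The borderline terms, typically of the shape $\int Ric(\nb_x^2u_x,\nb_x^2u_x)\cdot(\text{lower-order }L^\infty\text{ factor})$, are then dominated by $C\int Ric(\nb_x^2u_x,\nb_x^2u_x)\,dx$ with $C$ depending only on the conserved data $N,\ld,E_1(u_0),\|\nb_x u_x\|_{L^2}$. Finally the Ricci bound converts $Ric$-norms into $L^2$-norms: in the case $Ric\leq-\ld<0$ one has $\|\nb_x^2u_x\|_{L^2}^2\leq -\tfrac1\ld\int Ric(\nb_x^2u_x,\nb_x^2u_x)\,dx$, so that $\int Ric(\nb_x^2u_x,\nb_x^2u_x)\,dx$ is, modulo the $E_1,E_2$-controlled lower-order terms, comparable to $E_3$ itself. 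Collecting the estimates yields a bound of the form $\frac{dE_3}{dt}\leq C(E_3+1)$ with $C=C(N,\ld,E_1(u_0),\|\nb_x u_x\|_{L^2})$, as claimed; the parallel case $Ric\geq\ld>0$ is identical with the opposite sign convention.
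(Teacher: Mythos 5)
Your proposal follows essentially the same route as the paper's own proof: differentiate each of the four terms $F_1,\dots,F_4$ of $E_3$, substitute the flow, commute $\nabla_t$ past $\nabla_x$ using the curvature tensor and the identities (\ref{eq:3.0}), integrate by parts so the fifth-order term from $F_1$ reduces to terms quadratic in $\nabla_x^2u_x$, observe that the coefficients $6,-20\ro,-10\ro,-4$ make exactly these dangerous terms cancel (in the paper, $20\ro A_1+6A_2=10\ro A_1+6A_3=4A_1+6A_4=0$), and then control the surviving cubic lower-order terms via the interpolation inequalities, $E_1$-conservation, and the Ricci pinching $\pm\lambda$ to obtain ${dE_3\over dt}\leq C(1+E_3)$. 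The one step you defer---the explicit cancellation bookkeeping---is indeed carried out in the paper and confirms your expectation, so the plan is correct as stated.
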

\begin{proof}
For simplicity, we denote
\begin{eqnarray}
&&{}E_3(u)=A_1F_1+A_2F_2+A_3F_3+A_4F_4,
\end{eqnarray}

where $A_1=6,A_2=-20\ro,A_3=-10\ro,A_4=-4$ and
\begin{eqnarray}
F_1&=&\int Ric(\nb_x^2u_x,\nb_x^2u_x)dx;\nn\\
F_2&=&\int Ric(\nb_xu_x,u_x)^2dx;\nn\\
F_3&=&\int Ric(\nb_xu_x,\nb_xu_x)Ric(u_x,u_x)dx;\nn\\
F_4&=&\int Ric(\nb_xu_x,R(\nb_xu_x,u_x)u_x)dx.
\end{eqnarray}

To begin with, we calculate ${d\over dt}F_1$:
\begin{eqnarray}
&&{}{d\over dt }F_1={d\over dt}\int Ric(\nb_x^2u_x,\nb_x^2u_x)dx=2\int Ric(\nb_t\nb_x^2u_x,\nb_x^2u_x)dx\nn\\
&=&2\int Ric(\nb_x\nb_t\nb_xu_x,\nb_x^2u_x)dx+2\int Ric(R(u_t,u_x)\nb_xu_x,\nb_x^2u_x)dx\nn\\
&=&2\int Ric(\nb^3_xu_t,\nb_x^2u_x)dx+2\int Ric(\nb_x(R(u_t,u_x)u_x),\nb_x^2u_x)dx\nn\\
&&{}+2\int Ric(R(u_t,u_x)\nb_xu_x,\nb_x^2u_x)dx.\nn
\end{eqnarray}
Integrating by parts and substituting (\ref{eq:0.1}) yields
\begin{eqnarray}\label{eq:1.6}
&&{}{d\over dt}\int Ric(\nb_x^2u_x,\nb_x^2u_x)dx\nn\\
&=&-2\int Ric(\nb^5_xu_x,\nb_x^2u_x)dx-2\ro\int Ric(Ric(u_x,u_x)u_x,\nb_x^5u_x)\nn\\
&&{}-2\int Ric(R(\nb_x^2u_x,u_x)u_x,\nb_x^3u_x)dx\nn\\
&&{}+2\int Ric(R(\nb^2_xu_x,u_x)\nb_xu_x,\nb_x^2u_x)dx.
\end{eqnarray}
Here we used the property $R(Ric(u_x,u_x)u_x,u_x)\equiv0$. After
integrating by parts, we could see that the firs term of
(\ref{eq:1.6}) on the right vanishes, while the third term is equal
to the fourth term, i.e.
$$-2\int Ric(R(\nb_x^2u_x,u_x)u_x,\nb_x^3u_x)dx
=2\int Ric(R(\nb^2_xu_x,u_x)\nb_xu_x,\nb_x^2u_x)dx.$$ For the second
term of (\ref{eq:1.6}) , by the calculation of (\ref{eq:2.22}) we
have
\begin{eqnarray}\label{eq:1.7}
&&{}-2\ro\int Ric(\nb_x^5u_x,Ric(u_x,u_x)u_x)\nn\\
&=&20\ro\int Ric(\nb_x^2u_x,\nb_xu_x)Ric(\nb^2_xu_x,u_x)dx\nn\\
&&{}+10\ro\int Ric(\nb_x^2u_x,\nb^2_xu_x)Ric(\nb_xu_x,u_x)dx.
\end{eqnarray}

Hence we have
\begin{eqnarray}\label{eq:1.8}
&&{}{d\over dt }F_1={d\over dt}\int Ric(\nb_x^2u_x,\nb_x^2u_x)dx\nn\\
&=&20\ro\int Ric(\nb_x^2u_x,\nb_xu_x)Ric(\nb^2_xu_x,u_x)dx+10\ro\int Ric(\nb_x^2u_x,\nb^2_xu_x)Ric(\nb_xu_x,u_x)dx\nn\\
&&{}+4\int Ric(R(\nb^2_xu_x,u_x)\nb_xu_x,\nb_x^2u_x)dx.
\end{eqnarray}

For the second term of $E_3$,  we calculate
\begin{eqnarray}\label{eq:1.9}
&&{}{d\over dt}F_2={d\over dt}\int Ric(\nb_xu_x,u_x)^2dx\nn\\
&=&2\int Ric (\nb_t\nb_xu_x,u_x)Ric(\nb_xu_x,u_x)dx+2\int Ric (\nb_xu_x,\nb_tu_x)Ric(\nb_xu_x,u_x)dx\nn\\
&=&2\int Ric (\nb_x^2u_t,u_x)Ric(\nb_xu_x,u_x)dx+2\int Ric (\nb_xu_x,\nb_xu_t)Ric(\nb_xu_x,u_x)dx\nn\\
&&{}+2\int Ric(R(u_t,u_x)u_x,u_x)Ric(\nb_xu_x,u_x)dx.
\end{eqnarray}
Note that the last term of (\ref{eq:1.9}) vanishes since
$$Ric(R(u_t,u_x)u_x,u_x)=Ric(R(u_x,u_x)u_x,u_t)=0.$$
Substituting (\ref{eq:0.1}) into (\ref{eq:1.9}) we have:
\begin{eqnarray}\label{eq:1.10}
&&{}{d\over dt}F_2={d\over dt}\int Ric(\nb_xu_x,u_x)^2dx\nn\\
&=&2\int Ric (\nb_x^4u_x,u_x)Ric(\nb_xu_x,u_x)dx+2\int Ric (\nb_x^3u_x,\nb_xu_x)Ric(\nb_xu_x,u_x)dx\nn\\
&&{}+2\ro\int Ric(\nb_x^2(Ric(u_x,u_x)u_x),u_x)Ric(\nb_xu_x,u_x)dx\nn\\
&&{}+2\ro\int
Ric(\nb_x(Ric(u_x,u_x)u_x),\nb_xu_x)Ric(\nb_xu_x,u_x)dx.
\end{eqnarray}
After integrating by parts, we obtain
\begin{eqnarray}\label{eq:1.11}
&&{}2\int Ric (\nb_x^4u_x,u_x)Ric(\nb_xu_x,u_x)dx\nn\\
&=&-2\int Ric (\nb_x^3u_x,\nb_xu_x)Ric(\nb_xu_x,u_x)dx-2\int Ric (\nb_x^3u_x,u_x)Ric(\nb_x^2u_x,u_x)dx\nn\\
&&{}-2\int Ric (\nb_x^3u_x,u_x)Ric(\nb_xu_x,\nb_xu_x)dx\nn\\
&=&2\int Ric (\nb_x^2u_x,\nb^2_xu_x)Ric(\nb_xu_x,u_x)dx\nn\\&&{}+8\int Ric (\nb_x^2u_x,\nb_xu_x)Ric(\nb^2_xu_x,u_x)dx\nn\\
&&{}+4\int Ric (\nb_x^2u_x,\nb_xu_x)Ric(\nb_xu_x,\nb_xu_x)dx.\nn\\
&=&2\int Ric
(\nb_x^2u_x,\nb^2_xu_x)Ric(\nb_xu_x,u_x)dx\nn\\&&{}+8\int Ric
(\nb_x^2u_x,\nb_xu_x)Ric(\nb^2_xu_x,u_x)dx.
\end{eqnarray}
Similarly, for the other three terms in (\ref{eq:1.10}), we could
get the follow results:
\begin{eqnarray}\label{eq:1.12}
&&{}2\int Ric (\nb_x^3u_x,\nb_xu_x)Ric(\nb_xu_x,u_x)dx\nn\\
&=&-2\int Ric (\nb_x^2u_x,\nb^2_xu_x)Ric(\nb_xu_x,u_x)dx-2\int Ric (\nb_x^2u_x,\nb_xu_x)Ric(\nb_x^2u_x,u_x)dx\nn\\
&&{}-2\int Ric (\nb_x^2u_x,\nb_xu_x)Ric(\nb_xu_x,\nb_xu_x)dx;
\end{eqnarray}
\begin{eqnarray}\label{eq:1.13}
&&{}2\ro\int Ric(\nb_x^2(Ric(u_x,u_x)u_x),u_x)Ric(\nb_xu_x,u_x)dx\nn\\
&=&2\ro\int |Ric(\nb_xu_x,u_x)|^3dx-2\ro\int
Ric(\nb_xu_x,\nb_xu_x)Ric(\nb_xu_x,u_x)Ric(u_x,u_x)dx;
\end{eqnarray}
and
\begin{eqnarray}\label{eq:1.14}
&&{}2\ro\int Ric(\nb_x(Ric(u_x,u_x)u_x),\nb_xu_x)Ric(\nb_xu_x,u_x)dx\nn\\
&=&4\ro\int |Ric(\nb_xu_x,u_x)|^3dx+2\ro\int
Ric(\nb_xu_x,\nb_xu_x)Ric(\nb_xu_x,u_x)Ric(u_x,u_x)dx.
\end{eqnarray}
In view of (\ref{eq:1.10})$-$(\ref{eq:1.14}) we have
\begin{eqnarray}\label{eq:1.15}
&&{}{d\over dt}F_2={d\over dt}\int Ric(\nb_xu_x,u_x)^2dx\nn\\
&=&6\int Ric (\nb_x^2u_x,\nb_xu_x)Ric(\nb^2_xu_x,u_x)dx+6\ro\int
|Ric(\nb_xu_x,u_x)|^3dx.
\end{eqnarray}
 Next we compute ${d\over dt}F_3$.
\begin{eqnarray}\label{eq:1.16}
&&{}{d\over dt}F_3={d\over dt}\int Ric(\nb_xu_x,\nb_xu_x)Ric(u_x,u_x)dx\nn\\
&=&{}2\int Ric(\nb_t\nb_xu_x,\nb_xu_x)Ric(u_x,u_x)dx+2\int Ric(\nb_xu_x,\nb_xu_x)Ric(\nb_tu_x,u_x)dx\nn\\
&=&{}2\int Ric(\nb^2_xu_t,\nb_xu_x)Ric(u_x,u_x)dx+2\int Ric(\nb_xu_x,\nb_xu_x)Ric(\nb_xu_t,u_x)dx\nn\\
&&{}+2\int Ric(R(u_t,u_x)u_x,\nb_xu_x)Ric(u_x,u_x)dx.
\end{eqnarray}
Substituting (\ref{eq:0.1}) into (\ref{eq:1.16}) and integrating by
parts yield
\begin{eqnarray}\label{eq:1.17}
&&{}{d\over dt}F_3={d\over dt}\int Ric(\nb_xu_x,\nb_xu_x)Ric(u_x,u_x)dx\nn\\
&=&{}6\int Ric (\nb_x^2u_x,\nb_x^2u_x)Ric(\nb_xu_x,u_x)dx-4\ro\int |Ric(\nb_xu_x,u_x)|^3dx\nn\\
&&{}+10\ro\int Ric(\nb_xu_x,\nb_xu_x)Ric(\nb_xu_x,u_x)Ric(u_x,u_x)dx\nn\\
&&{}-4\ro\int Ric (\nb_xu_x,R(\nb_xu_x,u_x)u_x)Ric(\nb_xu_x,u_x)dx.
\end{eqnarray}

For the fourth term of $E_3(u)$, computing ${d\over dt}F_4$ yields
\begin{eqnarray}\label{eq:1.18}
&&{}{d\over dt}F_4={d\over dt}\int Ric(\nb_xu_x,R(\nb_xu_x,u_x)u_x)dx\nn\\
&=&2\int Ric(\nb_t\nb_xu_x,R(\nb_xu_x,u_x)u_x)dx+2\int Ric(\nb_xu_x,R(\nb_xu_x,u_x)\nb_tu_x)dx\nn\\
&=&2\int Ric(\nb_x^2u_t,R(\nb_xu_x,u_x)u_x)dx+2\int Ric(R(u_t,u_x)u_x,R(\nb_xu_x,u_x)u_x)dx\nn\\
&&{}+2\int Ric(\nb_xu_x,R(\nb_xu_x,u_x)\nb_xu_t)dx\nn\\
&=&2\int Ric(\nb_x^2u_t,R(\nb_xu_x,u_x)u_x)dx+2\int Ric(R(u_t,u_x)u_x,R(\nb_xu_x,u_x)u_x)dx\nn\\
&&{}-2\int Ric(\nb_x^2u_x,R(\nb_xu_x,u_x)u_t)dx-2\int
Ric(\nb_xu_x,R(\nb_x^2u_x,u_x)u_t)dx.
\end{eqnarray}
We substitute (\ref{eq:0.1}) into each term of (\ref{eq:1.18}) on
the right and obtain that:

for the first term of (\ref{eq:1.18})
\begin{eqnarray}\label{eq:1.19}
&&{}2\int Ric(\nb_x^2u_t,R(\nb_xu_x,u_x)u_x)dx\nn\\
&=&2\int Ric(\nb_x^4u_x,R(\nb_xu_x,u_x)u_x)dx+2\ro\int Ric(\nb_x^2(Ric(u_x,u_x)u_x),R(\nb_xu_x,u_x)u_x)dx\nn\\
&=&-2\int Ric(\nb_x^3u_x,R(\nb^2_xu_x,u_x)u_x)dx-2\int Ric(\nb_x^3u_x,R(\nb_xu_x,u_x)\nb_xu_x)dx\nn\\
&&{}+2\ro\int Ric(\nb_x^2u_x,R(\nb_xu_x,u_x)u_x)Ric(u_x,u_x)dx\nn\\
&&{}+8\ro\int Ric(\nb_xu_x,R(\nb_xu_x,u_x)u_x)Ric(\nb_xu_x,u_x)dx\nn\\
&=&4\int Ric(\nb_x^2u_x,R(\nb^2_xu_x,u_x)\nb_xu_x)dx
+6\ro\int Ric(\nb_xu_x,R(\nb_xu_x,u_x)u_x)Ric(\nb_xu_x,u_x)dx.\nn\\
\end{eqnarray}
For the last three terms of (\ref{eq:1.18}) we have
\begin{eqnarray}\label{eq:1.20}
&&{}2\int Ric(R(u_t,u_x)u_x,R(\nb_xu_x,u_x)u_x)dx-2\int Ric(\nb_x^2u_x,R(\nb_xu_x,u_x)u_t)dx\nn\\
&&{}-2\int Ric(\nb_xu_x,R(\nb_x^2u_x,u_x)u_t)dx\nn\\
&=&2\int Ric(R(\nb_x^2u_x,u_x)u_x,R(\nb_xu_x,u_x)u_x)dx-2\int Ric(\nb_xu_x,R(\nb_x^2u_x,u_x)\nb_x^2u_x)dx\nn\\
&&{}-2\ro\int Ric(\nb_x^2u_x,R(\nb_xu_x,u_x)u_x)Ric(u_x,u_x)dx\nn\\
&&{}-2\ro\int Ric(\nb_xu_x,R(\nb_x^2u_x,u_x)u_x)Ric(u_x,u_x)dx\nn\\
&=&2\int Ric(\nb_x^2u_x,R(\nb_x^2u_x,u_x)\nb_xu_x)dx-2\int Ric(R(\nb_xu_x,u_x)\nb_xu_x,R(\nb_xu_x,u_x)u_x)dx\nn\\
&&{}+4\ro\int Ric(\nb_xu_x,R(\nb_xu_x,u_x)u_x)Ric(\nb_xu_x,u_x)dx.
\end{eqnarray}
Combining (\ref{eq:1.18}), (\ref{eq:1.19}) and (\ref{eq:1.20})
yields
\begin{eqnarray}\label{eq:1.21}
&&{}{d\over dt}F_4={d\over dt}\int Ric(\nb_xu_x,R(\nb_xu_x,u_x)u_x)dx\nn\\
&=&6\int Ric(\nb_x^2u_x,R(\nb_x^2u_x,u_x)\nb_xu_x)dx-2\int Ric(R(\nb_xu_x,u_x)\nb_xu_x,R(\nb_xu_x,u_x)u_x)dx\nn\\
&&{}+10\ro\int Ric(\nb_xu_x,R(\nb_xu_x,u_x)u_x)Ric(\nb_xu_x,u_x)dx.
\end{eqnarray}
Hence by (\ref{eq:1.8}), (\ref{eq:1.15}), (\ref{eq:1.17}) and
(\ref{eq:1.21})
 we obtain
 \begin{eqnarray}\label{eq:E3}
&&{}{d\over dt}E_3(u)={d\over dt}(\sum_{i=1}^4A_i F_i)\nn\\
&=&(20\ro A_1+6A_2)\int Ric(\nb_x^2u_x,\nb_xu_x)Ric(\nb^2_xu_x,u_x)dx\nn\\
&&{}+(10\ro A_1+6A_3)\int Ric(\nb_x^2u_x,\nb^2_xu_x)Ric(\nb_xu_x,u_x)dx\nn\\
&&{}+(4A_1+6A_4)\int Ric(\nb_x^2u_x,R(\nb_x^2u_x,u_x)\nb_xu_x)dx\nn\\
&&{}+2\ro(5A_4-2A_3)\int Ric (\nb_xu_x,R(\nb_xu_x,u_x)u_x)Ric(\nb_xu_x,u_x)dx\nn\\
&&{}+2\ro(3A_2-2A_3)\int |Ric(\nb_xu_x,u_x)|^3dx\nn\\
&&{}+10\ro A_3\int Ric(\nb_xu_x,\nb_xu_x)Ric(\nb_xu_x,u_x)Ric(u_x,u_x)dx\nn\\
&&{}-2A_4\int Ric(R(\nb_xu_x,u_x)\nb_xu_x,R(\nb_xu_x,u_x)u_x)dx.
 \end{eqnarray}
Since $A_1=6$, $A_2=-20\ro$, $A_3=-10\ro$, $A_4=-4$ , it easy to see
that the first three terms of (\ref{eq:E3}) vanish. Hence
\begin{eqnarray}
{d\over dt}E_3(u)&=&-80\ro^2\int |Ric(\nb_xu_x,u_x)|^3dx\nn\\
&&{}40\ro(-1+\ro)\int Ric (\nb_xu_x,R(\nb_xu_x,u_x)u_x)Ric(\nb_xu_x,u_x)dx\nn\\
&&{}-100\ro^2\int Ric(\nb_xu_x,\nb_xu_x)Ric(\nb_xu_x,u_x)Ric(u_x,u_x)dx\nn\\
&&{}+8\int Ric(R(\nb_xu_x,u_x)\nb_xu_x,R(\nb_xu_x,u_x)u_x)dx\nn\\
&\leq& C(N,\ro)\int |\nb_xu_x|^3|u_x|^3dx\nn\\
&\leq& C(N,\ro)||u_x||^3_{L^\infty}\int|\nabla_x u_x|^3dx.\nn
 \end{eqnarray}
Using the following interpolation inequalities (see \cite{YD} for
details)
\begin{eqnarray}
||u_x||_{L^\infty}&\leq& C(N)(||\nabla_x u_x||^2_{L^2}+||u_x||^2_{L^2})^{1\over4}||u_x||^{1\over2}_{L^2}\nn\\
&\leq& C(N, ||\nabla_x u_x||_{L^2}, E_1(u_0));\label{eq:3.33}\\
||\nabla_x u_x||^3_{L^3}&\leq& C(N)(||\nabla_x ^2u_x||^2_{L^2}+||\nabla_x u_x||^2_{L^2})^{1\over4}||\nabla_x u_x||^{5\over2}_{L^2}\nn\\
&\leq&C(N, ||\nabla_x
u_x||_{L^2})\big(1+||\nabla_x^2u_x||^2_{L^2}\big),\nn\label{eq:3.34}
\end{eqnarray}
we have
\begin{eqnarray} \label{eq:3.e3}
{dE_3\over dt}&\leq& C\big(1+\int|\nabla_x^2u_x|^2dx),
\end{eqnarray}
where $C=C(N, ||\nabla_x u_x||_{L^2}, E_1(u_0),\ro)>0$ only depends
on $N$, $E_1(u_0)$ and $||\nabla_x u_x||_{L^2}$. By the assumption,
if $Ric\geq\ld>0$, we have
$$\int|\nabla_x^2u_x|^2dx\leq {1\over \lambda}\int Ric(\nb_x^2u_x,\nb_x^2u_x)dx\leq C_1E_3+C_2,$$
where $C_1,C_2$ only depend on $N,\lambda$ and $||\nabla_x
u_x||_{L^2}$. This together with (\ref{eq:3.e3}) yields
\begin{eqnarray} \label{eq:e3}
{dE_3\over dt}&\leq& C\big(1+E_3),
\end{eqnarray}
where $C=C(N, \lambda, ||\nabla_x u_x||_{L^2}, E_1(u_0))>0$ only
depends on $N$,$\lambda$, $E_1(u_0)$ and $||\nabla_x u_x||_{L^2}$.

It is easy to check that for the case the Ricci curvature on $N$ has
a negative upper bound $-\lambda<0$, we could also get the result
via a same argument. Hence we complete the proof of the lemma.
\end{proof}

\section{Global existence}
In this section we derive the global existence of the Cauchy problem
(\ref{eq:0.1}) and prove Theorem \ref{thm:1.3}. Since $u_0\in
H^4(S^1, N)$, we can always choose a sequence of smooth maps $u_{0i}
\in C^\infty(S^1, N)$ such that, as $i\rightarrow\infty$,
$$\|u_{0i}-u_0\|_{H^4}\rightarrow 0.$$
By the arguments in Theorem \ref{thm:1.2}, we get that the Cauchy
problem (\ref{eq:0.1}) with the initial map $u_{0i}$ admits a unique
smooth local solution $u^i$ such that
 $$u^i \in C([0, T(N, \|u_{0i}\|_{H^4})], H^k(S^1, N)))$$
for any $k \geq 4$. Obviously, it is easy to see that $T(N,
\|u_{0i}\|_{H^4})$ have a uniform lower bound. Hence, letting
$i\rightarrow \infty$, we obtain the local solution to the Cauchy
problem of the new geometric flow with the initial map $u_0\in
H^4(S^1, N)$. Thus, to prove Theorem \ref{thm:1.3}, we only need to
consider the case $u_0$ is a smooth map from $S^1$ into $N$.

Let $u$ be the local smooth solution of (\ref{eq:0.1}) which exists
on the maximal time interval $[0,T)$. We only discuss the case that
$T<\infty$ and we assume the Ricci curvature on $N$ has a positive
lower bound $\lambda>0$.

From Lemma \ref{lm:1.1}, we know that the energy is bounded by
$E_1(u_0)$ because
$$0\leq\int|u_x|^2dx\leq{1\over\lambda}E_1(u(t))={1\over\lambda}E_1(u_0),\qquad \text{for any}\quad t\in [0,T).$$
Moreover, from Lemma \ref{lm:3.2} we know that $E_2$ is preserved,
that is
$$E_2(u)=\int Ric(\nabla_x u_x,\nabla_x u_x)dx-{\ro\over2}\int Ric( u_x,u_x)^2dx=E_2(u_0)$$
 Thus we have
\begin{eqnarray}\label{eq:4.1}
||\nabla_x u_x||^2_{L^2}&\leq& {1\over\lambda}\int Ric(\nabla_x u_x,\nabla_x u_x)dx Ric\nn\\
 &=&{1\over\lambda}\left(E_3(u_0)+{\ro\over2}\int Ric( u_x,u_x)^2dx\right)\nn\\
&\leq& C(N,\lambda,\ro)\left(E_3(u_0)+\int|u_x|^4 dx\right)\nn\\
&\leq& C(N,E_1(u_0),\lambda,E_3(u_0),\ro),
\end{eqnarray}
note that here we used the interpolation inequality
\begin{eqnarray}\label{eq:4.2}
||u_x||^4_{L^4}&\leq&(||\nabla_x u_x||^2_{L^2}+||u_x||^2_{L^2})^{1\over2}||u_x||^3_{L^2}\nn\\
&\leq&{1\over 2}||\nabla_x u_x||^2_{L^2}+C(E_1(u_0),\lambda).\nn
\end{eqnarray}

Then, (\ref{eq:4.1}) together with Lemma \ref{lm:3.3} yields
$${dE_3\over dt}\leq C(N,E_1(u_0),E_2(u_0),\ro)\big(1+E_3\big).$$
By Gronwall inequality, we get that $E_3(u(t))$ is uniformly bounded
on $[0,T)$. Hence, we obtain
\begin{eqnarray}\label{eq:4.4}
6\lambda \int ||\nb_x^2u_x||^2dx&\leq&6\int Ric(\nabla_x^2u_x,\nabla_x^2u_x)dx\nn\\&=&E_3(u)+10\int Ric(\nb_xu_x,u_x)^2dx\nn\\
&&{}+5\int Ric(\nb_xu_x,\nb_xu_x)Ric(u_x,u_x)dx\nn\\
&&{}+4\int
Ric(\nb_xu_x,R(\nb_xu_x,u_x)u_x)dx\nn\\
&\leq& C(N,\lambda,
E_1(u_0),E_2(u_0),E_3(u_0),\ro)+C(N)||u_x||^2_{L^\infty}||\nabla_x
u_x||^2_{L^2}.
\end{eqnarray}

In view of (\ref{eq:3.33}), (\ref{eq:4.1}) and the boundedness of
$E_3$, we see that $||\nabla_x^2u_x||_{L^2}$ is uniformly bounded on
$[0,T)$. Hence we have
$$\sup_{t\in[0,T)} ||u_x||_{H^2}\leq C(N,\lambda,E_1(u_0),E_2(u_0),E_3(u_0),\ro).$$

It follows from the proof of Lemma \ref{lm:2.1} that for $m>2$
$$\sup_{t\in[0,T)} ||\nabla_x ^m u_x||_{L^2}\leq C(N,||u_{0x}||_{L^2}, ||\nabla_x u_{0x}||_{L^2},
||\nabla_x^2 u_{0x}||_{L^2},\cdots,||\nabla_x^m
u_{0x}||_{L^2},\ro).$$

Thus, if $T$ is finite, we can find a time-local solution $u_1$ of
(\ref{eq:0.1}) and $u_1$ satisfies the initial value condition
$$u_1(x,T-\epsilon)=u(x,T-\epsilon),$$
where $0<\epsilon<T$ is a small number. Then by the local existence
theorem, $u_1$ exists on the time interval
$(T-\epsilon,T-\epsilon+\eta)$ for some constant $\eta>0$. The
uniform bounds on $||u_x||_{H^2}$ and $||\nabla_x^m u_x ||_{L^2}$
(for all $m>2$) implies that $\eta$ is independent of $\epsilon$.
Thus, by choosing $\epsilon$ sufficiently small, we have
$$T_1=T-\epsilon+\eta>T.$$
By the uniqueness result, we have that $u_1(x,t)=u(x,t)$ for all
$t\in[T-\epsilon,T_1)$. Thus we get a solution of the Cauchy problem
(\ref{eq:0.1}) on the time interval $[0,T_1)$,
which contradicts the maximality of $T$.

For the case that the Ricci curvature on $N$ has a negative upper
bound $-\lambda<0$, we still could obtain
the results via the same arguments and we omit the details. Thus we complete the
proof of Theorem {\ref{thm:1.3}}. \hspace*{\fill}$\Box$\\

\vspace{1cm}
\noindent{Xiaowei Sun}\\
School of Statistics and Mathematics, \\
Central University of Finance and Economics\\
Beijing 100081, P.R. China.\\
Email: sunxw@cufe.edu.cn\\\\
Youde Wang\\
Academy of Mathematics and Systems Science\\
Chinese Academy of Sciences,\\
Beijing 100190, P.R. China.\\
Email: wyd@math.ac.cn


\begin{thebibliography}{50}
\setlength{\itemsep}{-3pt}
\small
\bibitem{BIKT} I. Bejenaru, A. D. Ionescu, C. E. Kenig, D. Tataru;
\emph{Global Schr\"odinger maps,} preprint.

\bibitem{Uh} N. H. Chang, J. Shatah, K. Uhlenbeck; \emph{Schr\"{o}dinger
maps,} Commun. Pure Appl. Math. \textbf{53}(2000), 590--602.

\bibitem{CT} J. Colliander, M. Keel, G. Staffilani, H. Takaoka, and
T. Tao; \emph{Sharp global well-posedness for KdV and modified KdV
on $\mathbb{R}$ and $\mathbb{T}$,} J. Amer. Math. Soc.
\textbf{16}(2003), 705--749.


\bibitem{DJKM} E. Datet, M. Jimbo, M. Kashiwara and T. Miwa;
\emph{Landau-Lifshitz equation: solitons, quasi-periodic solutions
and infinite-dimensional Lie algebras}, Phys. A: Math. Gen.
\textbf{16} (1983) 221-236.

\bibitem{DQ} Q. Ding, Y. D. Wang; \emph{Geometric KdV flows, motions of curves and the third-order system of the AKNS hierarchy},
Internat. J. Math. 22 (2011), no. 7, 1013¨C1029.

\bibitem{D} W. Y. Ding; \emph{On the Schr\"odinger flows}, Proc. ICM Beijing 2002, 283--292.

\bibitem{DW}W. Y. Ding, Y. D. Wang; \emph{Schr\"{o}dinger flows of maps
into symplectic manifolds,} Sci. China \textbf{A41}(1998), 746--755.

\bibitem{YD}W. Y. Ding, Y. D. Wang; \emph{Local Schr\"{o}dinger flow into
K\"{a}hler manifolds,} Sci. China \textbf{A44}(2001), 1446--1464.


\bibitem{Fr} A. Friedman, \emph{Partial differential equations of parabolic type},
Prentice-Hall, Inc., Englewood Cliffs, N.J. (1964).

\bibitem{FM} Y. Fukumoto, T. Miyazaki; \emph{Three-dimensional distortions of a
vortex filament with axial velocity,} J. Fluid Mech.
\textbf{222}(1991), 369--416.

\bibitem{GK} V. Gerdjikov, N. A. Kostov; \emph{Reductions of multicomponent mKdV equations
on symmetric spaces of DIII-type}, SIGMA \textbf{4}(2008), 29--58.

\bibitem{GS} I. Golubchik, V. Sokolov; \emph{Multicomponent generalization of the
hierarchy of the Landau-Lifshitz equation,} Theor. and Math. Phys.,
\textbf{124}(2000), no. 1, 909--917.

\bibitem{Ha} H. Hasimoto; \emph{A soliton on a vortex filament}, J.  Fluid
mech. \textbf{51}(1972), 477--485.

\bibitem{Hel} S. Helgason; \emph{Differential Geometry, Lie Groups,
and Symmetric Spaces}, AMS, Providence, Rhode Island.

\bibitem{Hi} R. Hirota; \emph{Exact envelope-soliton solutions of a
nonlinear wave equation,} J. Math. Phys. \textbf{14}(1973),
805--809.

\bibitem{HK} A. Hasegawa, Y. Kodama; \emph{Nonlinear pulse propagation
in a monomode dielectric guide}, IEEE J. Quantum Elec.
\textbf{23}(1987), 510--524.

\bibitem{MS} A. Meshkov, V. Sokolov; \emph{Integrable evolution equations on
the $n$-dimensional sphere}, Comm Math Phys, \textbf{232}(2002),
1-18.

\bibitem{MR} K. Moffatt, L. Ricca; \emph{Interpretation of invariants of the
Betchov-Da rios equation and of the euler equations}, The Global
Geometry of Turbulence, Plenum Press, New York (1991).

\bibitem{NT}T. Nishiyama, A. Tani; \emph{Initial and initial-boundary value
problems for a vortex filament with or without axial flow,} SIAM J.
Math. Anal. \textbf{27}(1996), 1015--1023.

\bibitem{TN} A. Tani, T. Nishiyama; \emph{Solvability of equations for
motion of a vortex filament with or without axial flow}, Publ. Res.
Inst. Math. Sci. \textbf{33}(1997), 509--526.

\bibitem{Od} E. Onodera; \emph{A third-order dispersive flow for closed
curves into K\"{a}hler manifolds,} J. Geom. Anal. \textbf{18}(2008),
889--918.

\bibitem{Pe} P. Petersen; \emph{Riemannian Geometry}, New York:
Springer Verlag(1998).

\bibitem{RL}R. Ricca; \emph{Rediscovery of Da Rios equations,} Nature \textbf{352},
561--562(1991).

\bibitem{RRS} I. Rodnianski, Y. A. Rubinstein, G. Staffilani;
\emph{On the global well-posedness of the one-dimensional
Schrodinger map flow,} Analysis and PDE, \text{2}(2009), 187--209.

\bibitem{So}
C. Song; \emph{The KdV curve and Schr\"odinger-Airy curve}, Proc.
Amer. Math. Soc., \textbf{140} (2012), no. 2, 635--644.

\bibitem{SY}
C. Song, J. Yu; \emph{The Cauchy problem of generalized
Landau-Lifshitz equation into $S^n$},  Sci. China Math. \textbf{56}
(2013), no. 2, 283--300.

\bibitem{SW}
X. W. Sun, Y. D. Wang; \emph{KdV Geometric Flows on K\"ahler
Manifolds},  Inter. J. Math., \textbf{22}(2011), 1-62.

\bibitem{SW*}
X. W. Sun, Y. D. Wang; \emph{Geometric Schr\"odinger-Airy Flows on
K\"ahler Manifolds},  Acta Math. Sinica, English series,
\textbf{29}(2013), no.2, 209-240.

\bibitem{TW}T. Tsuchida, M.Wadati; \emph{The coupled modified Korteweg-de Vries
equations}, J. Phys. Soc. Jpn. \textbf{67}(1998), 1175-1187.

\bibitem{TS} T. Tsuchida; \emph{New reductions of integrable matrix partial differential equations: $Sp(m)$-invariant systems},
J. Math. Phys. 51 (2010), no.5, 053511, 27 pp.

\bibitem{TA}M. E. Taylor; \emph{Partial Differential Equations III: Nonlinear Equations},
New York: Springer Verlag(1997).

\bibitem{Ve} A.P. Veselov; \emph{Finite-gap potentials and integrable systems on the sphere with a quadratic potential},
Funct. Anal. and Appl. \text{14}(1980), no.1, 48-50.

\bibitem{WH}B. Wang, L. Han, C. Huang; \emph{Global well-posedness and
scattering for the derivative nonlinear Schr\"odinger equation with
small rough data}, Ann. Inst. H. Poincar¨¦ Anal. Non Lin\'eaire
26(2009), no.6, 2253--2281.
\end{thebibliography}
\end{document}